\definecolor{darkgreen}{rgb}{0,0.5,0}
\definecolor{darkred}{rgb}{0.7,0,0}
\definecolor{darkblue}{rgb}{0,0,0.7}
\theoremstyle{plain}
\newtheorem{theorem}{Theorem}[section]
\newtheorem{lemma}[theorem]{Lemma}
\newtheorem{proposition}[theorem]{Proposition}
\newtheorem{condition}[theorem]{Condition}
\newtheorem{conjecture}[theorem]{Conjecture}
\theoremstyle{remark}
\newtheorem{remark}[theorem]{Remark}
\numberwithin{equation}{section}
\numberwithin{table}{section}
\newcommand{\N}{\mathbb{N}}
\newcommand{\R}{\mathbb{R}}
\newcommand{\Z}{\mathbb{Z}}
\newcommand{\Pro}{\mathbb{P}}
\newcommand{\E}{\mathbb{E}}
\newcommand{\cross}{\text{\textup{Cross}}}
\newcommand{\arm}{\text{\textup{Arm}}}
\renewcommand{\Z}{\mathbb{Z}}
\renewcommand{\N}{\mathbb{N}}
\def\calC{\mathcal{C}}
\def\calN{\mathcal{N}}
\def\Var#1{\mathrm{Var}\bigl[ #1\bigr]}
\def\Cov#1{\mathrm{Cov}\bigl[ #1\bigr]}
\def\P{\mathbb{P}} %{{\bf P}}
\def\E{\mathbb{E}} %{{\bf E}}
\def\md{\mid}
\def \eps {\varepsilon}
\def\Bb#1#2{{\def\md{\bigm| }#1\bigl[#2\bigr]}}
\def\Pb{\Bb\P}
\def\Eb{\Bb\E}
\def \p {{\partial}}
\renewcommand{\subset}{\subseteq}
\renewcommand{\hat}{\widehat}
\def\<#1{\langle #1\rangle}
\def\bi{\begin{itemize}}  %USE\bi[WHATEVER]
\def\ei{\end{itemize}}
\def\bnum{\begin{enumerate}} % USE \bnum[i)] if want i), ii) .. OR \bnum[{\bf (a)}] etc ..!
\def\enum{\end{enumerate}}
\def\ni{\noindent}
\def\bf{\bfseries}
\colorlet{symbols}{blue!90!black}
\colorlet{testcolor}{green!60!black}
\def\1{\mathbf{{1}}}
\def\g2{\frac {\gamma^2} 2}
\title{Bargmann-Fock percolation is noise sensitive}
\author{Christophe Garban and Hugo Vanneuville}
\address
{Université Claude Bernard Lyon 1, CNRS UMR 5208, Institut Camille Jordan, 69622 Villeurbanne, France}
\email{garban@math.univ-lyon1.fr; vanneuville@math.univ-lyon1.fr}
\begin{document}

\maketitle

%\newpage
%\begin{center}
%\sc \bf \large
%TITLE 2: Diffuse-extensive-amenability, inverted orbits of exclusion processes and (non-?)amenability of the interval exchanges
%\end{center}

\begin{abstract}
We show that planar Bargmann-Fock percolation is noise sensitive under the Ornstein-Ulhenbeck process.  The proof is based on the {\em randomized algorithm} approach introduced by Schramm and Steif (\cite{SS}) and gives quantitative polynomial bounds on the noise sensitivity of crossing events for Bargmann-Fock. 

A rather counter-intuitive consequence is as follows. Let $F$ be a Bargmann-Fock Gaussian field in $\R^3$ and consider two horizontal planes $P_1,P_2$  at small distance $\varepsilon$ from each other.  Even though $F$ is a.s. analytic, the above noise sensitivity statement implies that the full restriction of $F$ to $P_1$ (i.e. $F_{| P_1}$) gives almost no information on the percolation configuration induced by $F_{|P_2}$.  

As an application of this noise sensitivity analysis, we provide a Schramm-Steif based proof that the near-critical window of level line percolation around $\ell_c=0$ is polynomially small. This new approach extends earlier sharp threshold results to a larger family of planar Gaussian fields. 
\end{abstract}

%\setcounter{tocdepth}{2}%2 will be better once finished
%\tableofcontents

\section{Main results}

\subsection{Bargmann-Fock percolation is noise sensitive}\label{ss:BF}

The planar Bargmann-Fock field is the smooth centered Gaussian field $f$ on $\R^2$ defined by the following covariance kernel
\begin{align*}\label{}
\forall x,y \in \R^2, \, \Cov{f(x),f(y)}=\exp(-\frac 1 2 |x-y|^2) \, .
\end{align*}
It can be realized as the following random entire function
\begin{align}\label{e.BF}
f(x)=f(x_1,x_2)= e^{-\frac{1}{2}|x|^2} \sum_{i,j\in \N} \frac{a_{i,j}}{\sqrt{i! j!}} x_1^i x_2^j \, ,
\end{align}
where $(a_{i,j})_{i,j}$ are i.i.d standard Gaussians (the sum converges uniformly on any compact a.s.).

The percolation model induced by the level sets of smooth Gaussian fields has been studied extensively these last few years (see \cite{BG,BM,BGbis,RVa,RVb,MV,R}) and is believed to behave like Bernoulli percolation for a large family of planar Gaussian fields (see \cite{W,BSa,BDS,BSb}). In the case of the Bargmann-Fock field - and for a wide class of Gaussian fields, see Subsection \ref{ss:gen} - it is known that this percolation model exhibits a sharp phase transition at the critical level $\ell_c=0$ in the following sense. If $\ell \in \R$ and if $Q$ is a quad (i.e. a Jordan domain of $\R^2$ with piecewise smooth boundary and with two distinguished disjoint segments on $\partial Q$), we let $\cross_\ell(Q)$ be the event that there is a continuous path included in $Q \cap \{ f >-\ell \}$ that connects the two distinguished segments of $\partial Q$. Then,
\begin{itemize}
\item Theorem 1.1 of \cite{BG}: for every quad $Q$, there exists $c=c(Q) \in ]0,1[$ such that, for every $s>0$, $c \leq \Pro \left[ \cross_0(sQ) \right] \leq 1-c$;
\item Theorem 1.8 of \cite{RVb}: for every $\ell > 0$ and every quad $Q$, there exists $c=c(\ell,Q)>0$ such that, for every $s>0$, $\Pro \left[ \cross_{-\ell}(sQ) \right] \leq e^{-cs}$ and $\Pro \left[ \cross_\ell(sQ) \right] \geq 1 -e^{-cs}$.
\end{itemize}
As explained in \cite{BG,RVb}, this implies that i) if $\ell \leq 0$, a.s. there is no unbounded component in $\{ f > -\ell \}$ (this was first proved in \cite{Alex}) and ii) if $\ell > 0$, a.s. there is a unique unbounded component in $\{ f > -\ell \}$. In the present paper, we prove that the model is noise sensitive at the critical level $\ell_c=0$. Let us explain what it means. By analogy with the model of \textbf{dynamical percolation} introduced in \cite{HPS}, we consider the following dynamics we shall call \textbf{dynamical Bargmann-Fock} model:
\begin{align*}\label{}
f(t,x) = f(t,x_1,x_2)=e^{-\frac{1}{2}|x|^2} \sum_{i,j\in \N} \frac{a_{i,j}(t)}{\sqrt{i! j!}} x_1^i x_2^j \, ,
\end{align*}
where $(t \mapsto a_{i,j}(t) )_{i,j}$ are independent Ornstein-Uhlenbeck processes with invariant measure $\calN(0,1)$ i.e. $d a_{i,j}(t) = - a_{i,j}(t) dt + dB_{i,j}(t)$; $a_{i,j}(0) \sim \mathcal{N}(0,1)$. (Equivalently, $t \mapsto a_{i,j}(t)$ is a continuous centered Gaussian field on $\R_+$ with covariance $(s,t) \mapsto e^{-|t-s|}$.)
\medskip

Our main result states that Bargmann-Fock percolation is sensitive to a polynomially small noise. This result is analogous to the quantitative noise sensitivity result for Bernoulli percolation obtained by Schramm and Steif \cite{SS}.
\begin{theorem}[See Theorem \ref{thm:mainbis} below for a more general statement]\label{th.main} $ $

\ni
There exists $\alpha>0$ such that the following holds. For any quad $Q$, there exists $c=c(Q)>0$ such that, for every sequence $(t_n)_{n \geq 1}$ that satisfies $t_n \geq n^{-\alpha}$, we have
\begin{align*}\label{}
\Cov{1_{f(0) \in \cross_0(nQ)}, 1_{f(t_n)\in \cross_0(nQ)}} \leq  cn^{-\alpha} \, .
\end{align*}
\end{theorem}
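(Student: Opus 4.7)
The proof follows the randomized algorithm strategy of Schramm and Steif \cite{SS}, adapted from Bernoulli bits to the Gaussian setting. First, I would realize $f$ as a convolution $f = q \ast W$, where $W$ is planar white noise and $q$ is the convolution square root of the covariance (a Gaussian kernel, $q(x) \propto e^{-|x|^2}$). Truncating $q$ outside a ball of radius $R = C \sqrt{\log n}$ introduces a super-polynomially small error on $nQ$, and discretizing the truncated white noise on a mesh of size $\eps$ (independent of $n$) produces a vector $(g_i)_{i \in I}$ of $|I| = O(n^2)$ i.i.d.\ standard Gaussians such that the crossing event agrees, up to an event of probability at most $n^{-100}$, with a measurable function $F_n: \R^{|I|} \to \{0,1\}$. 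Crucially, the OU dynamics on the coefficients $(a_{i,j}(t))$ transfer exactly to independent OU dynamics on $(g_i(t))$, so $\Cov{g_i(0), g_i(t_n)} = e^{-t_n}$, and decomposing $F_n$ into Hermite chaoses gives
\begin{align*}
\Cov{F_n(g(0)), F_n(g(t_n))} = \sum_{k \geq 1} e^{-k t_n} \, \|F_n^{=k}\|_2^2 \, .
\end{align*}
With the cutoff $k_0 \asymp t_n^{-1} \log n$ the tail $k > k_0$ is already at most $n^{-\alpha}$, so it suffices to bound $\sum_{1 \leq k \leq k_0} \|F_n^{=k}\|_2^2$ by an appropriate negative power of $n$.

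The key ingredient is a Gaussian analogue of the Schramm-Steif inequality: if a randomized algorithm $\calA$ determining $F_n$ has revealment $\delta_n := \max_{i \in I} \Pro[\calA \text{ queries } g_i]$, then by hypercontractivity on Hermite chaos
\begin{align*}
\|F_n^{=k}\|_2^2 \leq k \, \delta_n \, \|F_n\|_2^2 \, .
\end{align*}
To produce such an $\calA$ with polynomially small $\delta_n$, I would adapt the one-arm exploration of \cite{SS}: pick a uniformly random point $y$ on one of the distinguished sides of $\p (nQ)$, and explore the cluster of $\{f > 0\}$ starting from $y$ along the standard interface procedure, querying only the white-noise cells needed to decide (up to a quasi-independence error) the sign of $f$ along the interface. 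Combining the exponential decay of $q$ (so that $f$ at a point is essentially determined by cells within distance $R$) with the RSW-type and near-critical estimates for Bargmann-Fock in \cite{BG, RVb} yields a quantitative one-arm bound with some exponent $\alpha_1 > 0$, and hence the revealment bound $\delta_n \leq n^{-\beta}$ for some $\beta > 0$. Choosing $\alpha < \beta / 2$ then gives the desired polynomial noise sensitivity.

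The principal obstacle lies in implementing the interface exploration in a Gaussian model with nontrivial (though exponentially decaying) correlations: each query of a cell $g_i$ only determines $f$ locally up to a residual contribution from the other cells, and one must interleave exploration steps with quasi-independence arguments to ensure that the sign of $f$ along the explored interface can be decided using only cells within distance $R$ of it. Secondary technical points include establishing the Gaussian Schramm-Steif inequality in a form compatible with discretized white-noise inputs, and tuning the parameters $R$ and $\eps$ so that all approximation errors stay negligible compared to $n^{-\alpha}$.
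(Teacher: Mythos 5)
Your overall strategy (truncate $q$, discretize the white noise, apply a Schramm--Steif revealment bound, control the revealment by one-arm probabilities) is the same as the paper's, but two of your central steps have genuine gaps. First, the ``Gaussian analogue of the Schramm--Steif inequality'' that you invoke is the crux of the whole argument, and your justification --- ``by hypercontractivity on Hermite chaos'' --- is not a proof: the Schramm--Steif bound is not a hypercontractivity statement, and its proof (a decision-tree/martingale argument) controls the orthogonal (Efron--Stein) decomposition indexed by the \emph{set of queried coordinates}, not the Hermite-degree grading $F_n^{=k}$ you use; to run your argument you would have to prove or cite a product-space version of \cite{SS} and then relate the support-size grading to the degree grading. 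The paper sidesteps this issue entirely: it discretizes the white noise into genuine $\pm 1$ Bernoulli bits on a mesh $1/N$, so that Theorem \ref{thm:SchSt} applies verbatim to the Boolean function $g_{n,N}$, and then lets $N\to\infty$ at fixed $n$, using convergence in law of $(W_0^N,W_t^N)$ in negative Sobolev spaces (Appendix \ref{s:sobo}) together with the regularity statement of Appendix \ref{s.reg} to transfer the covariance bound to the continuum field. Relatedly, your claim that a mesh of size $\eps$ \emph{independent of $n$} makes the crossing event agree with a Boolean function up to probability $n^{-100}$ is unsupported (a fixed mesh gives no decay in $n$); the paper needs no such rate precisely because it compares covariances only in the limit $N\to\infty$.

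Second, your randomized algorithm does not work as described. Exploring the cluster of a single uniformly random point $y$ on a distinguished side of $\p(nQ)$ does not determine the crossing event: to certify the absence of a crossing one must discover \emph{every} cluster attached to the side (or to a crossing line), and if you do that for a fixed side, the bits within the truncation radius of that side are revealed with probability of order one, so the revealment is not small. The device that makes this work (in \cite{SS} and in the paper) is a uniformly random \emph{vertical line}, here chosen among $\Theta(n^{1-\gamma})$ positions spaced by the truncation radius: one discovers that line and then all clusters of $\{f^N_{n^\gamma}>0\}\cap nQ$ meeting it, which does determine the crossing, and the revealment of any bit is bounded by $cn^{\gamma-1}\sum_k \Pro[f_{n^\gamma}\in\arm_0(cn^\gamma,(k-1)n^\gamma/c)]$, i.e.\ by an averaged one-arm probability, which decays polynomially by the RSW-based estimate of \cite{MV} (Theorem 4.7 there) combined with the truncation estimate (Proposition \ref{p:trunc}). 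Your use of the truncation radius $C\sqrt{\log n}$ is fine for the Bargmann--Fock case of Theorem \ref{th.main}, but note that the interleaved ``quasi-independence along the interface'' machinery you anticipate is unnecessary once one works with the truncated field and reveals all bits within distance $n^\gamma/2$ (or $C\sqrt{\log n}$) of the explored region, which is exactly how the paper defines ``discovering'' a region.
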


\begin{remark}
Theorem \ref{th.main} (as well as consequences of this result, such as Proposition \ref{pr.cor}, or generalizations such as Theorem \ref{thm:mainbis}) also hold if we replace the event $\cross_0(nQ)$ by the event that there is a crossing by a nodal line i.e. the event that there is a continuous path included in $nQ \cap \{ f = 0 \}$ that connects the two distinguished segments of $\partial (nQ)$ (and the proof is exactly the same even though the later event is not monotone). Note that, for the Gaussian fields that we consider in this paper, it is known that this event is also non-degenerate (see \cite{BG,MV}).
\end{remark}

Thanks to the Wiener chaos expansion of the $L^2$ functional $1_{f\in \mathrm{Cross}(nQ)}$, we will extract from the above theorem the following counter-intuitive property for the 3D Bargmann-Fock field. Let $F$ be a 3D Bargman-Fock field i.e. the smooth Gaussian field on $\R^3$ with covariance
\begin{align*}\label{}
\forall x,y \in \R^3, \, \Cov{F(x),F(y)}=\exp(-\frac 1 2 |x-y|^2) \, .
\end{align*}
Note that the restriction of $F$ to any plane is a planar Bargmann-Fock field. The following proposition states that the restriction of $F$ to a horizontal plane $P$ gives almost no information about the percolation properties in another horizontal plane at small distance $\eps$ from $P$. Note that, on the contrary, knowing $F$ restricted to a slide $\R^2 \times [t,t+\varepsilon]$ (or to any open non-empty subset of $\R^3$) freezes the whole field by analytical rigidity.
\begin{proposition}\label{pr.cor}
For every $t \in \R$, let $P(t)=\R^2 \times \{t\}$. There exists $\alpha>0$ such that, for any quad $Q$, there exists $c=c(Q)>0$ such that the following holds. For every sequence $(t_n)_{n \geq 1}$ that satisfies $t_n \geq n^{-\alpha}$, we have
\begin{align*}
\Var{\Pb{F_{|P(t_n)} \in \cross_0(nQ) | F_{|P(0)}}} \leq cn^{-\alpha} \, .
\end{align*}
\end{proposition}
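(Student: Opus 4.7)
The plan is to reduce Proposition \ref{pr.cor} to Theorem \ref{th.main} by matching the joint law of two horizontal restrictions of the 3D field with the joint law of the dynamical planar Bargmann-Fock field at two times. The key observation is a direct covariance computation: for any $x,y\in\R^2$ and $t\ge 0$,
\begin{align*}
\Cov{F(x,0),F(y,t)} = e^{-t^2/2}\, e^{-|x-y|^2/2},
\end{align*}
whereas from the OU dynamics on the coefficients $(a_{i,j}(t))$ one finds $\Cov{f(0,x),f(s,y)} = e^{-s}\, e^{-|x-y|^2/2}$. With $s_n:=t_n^2/2$, the two centered Gaussian pairs agree, so that $(F_{|P(0)},F_{|P(t_n)})$ has exactly the same distribution as $(f(0,\cdot),f(s_n,\cdot))$.

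Next I would relate the conditional variance to the dynamical covariance. Set $X(h):=1_{h\in\cross_0(nQ)}$, $X_0:=X(F_{|P(0)})$ and $X_{t_n}:=X(F_{|P(t_n)})$. Under the identification above, $\E[X_{t_n}\mid F_{|P(0)}]$ is precisely the Ornstein-Uhlenbeck semigroup $P_{s_n}$ applied to $X$, evaluated at $F_{|P(0)}$. Expanding $X-\E[X]=\sum_{k\ge 1}\chi_k$ in the Wiener chaos of the planar Bargmann-Fock field and using orthogonality of the chaoses,
\begin{align*}
\Var{\E[X_{t_n}\mid F_{|P(0)}]} = \sum_{k\ge 1}e^{-2k s_n}\|\chi_k\|_2^2 \;\le\; \sum_{k\ge 1}e^{-k s_n}\|\chi_k\|_2^2 = \Cov{X_0,X_{t_n}},
\end{align*}
where the inequality uses $e^{-2k s_n}\le e^{-k s_n}$ for all $s_n,k\ge 0$, and the last equality follows from self-adjointness of $P_{s_n}$.

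Finally I would invoke Theorem \ref{th.main}. If $\alpha_0>0$ is the exponent it provides, choose any $\alpha\in(0,\alpha_0/2)$: the hypothesis $t_n\ge n^{-\alpha}$ then forces $s_n=t_n^2/2\ge n^{-\alpha_0}$ for all sufficiently large $n$, and Theorem \ref{th.main} (applied in the dynamical planar model at noise $s_n$) yields $\Cov{X_0,X_{t_n}}\le c\,n^{-\alpha_0}$, hence the desired polynomial bound after adjusting constants. All the real content lies in Theorem \ref{th.main}; the present argument is a clean $L^2$ reduction, and the only place requiring any care is getting the Gaussian covariances straight in the first step.
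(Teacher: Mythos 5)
Your proposal is correct and follows essentially the same route as the paper: identify the joint law of $(F_{|P(0)},F_{|P(t_n)})$ with the planar Ornstein--Uhlenbeck dynamics run for time $t_n^2/2$, bound the conditional variance by the dynamical covariance through a Wiener chaos expansion, and conclude with Theorem \ref{th.main}. The only (immaterial) difference is that the paper proves the exact spectral identity $\Var{\Pb{f(t)\in A\mid f(0)}}=\Cov{1_{f(0)\in A},1_{f(2t)\in A}}$ (Proposition \ref{p.Wiener}, via multiple integrals against the white noise $W$ after noting that conditioning on $f(0)$ and on $W_0$ agree), whereas you settle for the inequality $\sum_k e^{-2ks}\|\chi_k\|_2^2\le\sum_k e^{-ks}\|\chi_k\|_2^2$ at the same time $s$, which is equally sufficient.
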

See Remark \ref{rem:counter_int_fourier} for another reason why the above result seems counter-intuitive (from a Fourier point of view).

\subsection{Generalization of the result and application to the phase transition}\label{ss:gen}

In this subsection, we generalize our main result to a large family of planar Gaussian fields. As in \cite{MV}, we consider a planar white noise $W$ which is the centered Gaussian field $( \int u dW)_{u \in L^2(\R^2)}$ indexed by $L^2(\R^2)$ with the following covariance kernel
\[
\E \left[ \int udW \int vdW \right] = \int uv \, .
\]
Let $q \, : \, \R^2 \rightarrow \R$ be an $L^2$ function. Most of the time, we will ask that $q$ satisfies some of the following conditions listed below.
\begin{condition}[Symmetry and regularity]\label{cond:sym}
The function $q$ is not identically zero, for every
$x \in \R^2$, $q(x)=q(-x)$, and $q$ is symmetric under reflection in the $x$-axis and rotation by $\pi/2$ about the origin. Moreover, $q$ is $\mathcal{C}^3$ and there exist $c > 0$ and $\varepsilon>0$ such that, for every multi-index $\alpha$ with $|\alpha| \leq 3, |\partial^\alpha q(x)| \leq c|x|^{-(1+\varepsilon)}$. Furthermore, the support of the Fourier transform of $q$ contains an open set.\footnote{Note that this is a direct consequence of $q$ being not identically equal to $0$ when $q$ is $L^1$, which will be case most of the time for us since we will often assume that Condition \ref{cond:pol} holds for some $\beta > 2$.}
\end{condition}

\begin{condition}[Weaker than Condition \ref{cond:sym}]\label{cond:weak}
The function $q$ is $\mathcal{C}^2$ and there exist $c,\delta > 0$ such that, for every multi-index $\alpha$ with $|\alpha| \leq 2$, we have
\[
\forall x \in \R^2, \, \left| \partial^\alpha q (x) \right| \leq c|x|^{-(1+\delta)} \, .
\]
\end{condition}

\begin{condition}[Positive correlations]\label{cond:FKG}
The function $q$ satisfies
\[
q \star q \geq 0 \, .
\]
\end{condition}

\begin{condition}[Polynomial decay; depends on a parameter $\beta > 0$]\label{cond:pol}
There exists $c > 0$ such that, for every multi-index $\alpha$ with $|\alpha| \leq 1, |\partial^\alpha q(x)| \leq c|x|^{-\beta}$.
\end{condition}

Assume that $q$ satisfies Conditons \ref{cond:sym}, \ref{cond:FKG} and \ref{cond:pol} for some $\beta > 2$. Let $f$ be the planar Gaussian field
\[
x \in \R^2 \mapsto f(x)=q\star W(x) := \int_{\R^2} q(x-y)dW(y)\, .
\]
Notice that the covariance of $f$ is $(x,y) \in (\R^2)^2 \mapsto q \star q (x-y)$. Moreover, one can show (see for instance Subsection 3.2 of \cite{MV}) that Condition \ref{cond:sym} implies that there exists a modification of $f$ which is a.s. $\mathcal{C}^2$. In the rest of the paper, we work with this $\mathcal{C}^2$ modification (N.B. the weaker Condition \ref{cond:weak} only implies the property that there exists a modification of $f$ which is continuous, see for instance Appendix \ref{s.appendix}). Note that the Bargmann-Fock field can be realized by choosing $q(x)=(2/\pi)^{1/4}e^{-|x|^2}$ (which satisfies all the above conditions, and for any $\beta >0$).

We extend our above noise sensitivity result to the dynamical processes $t \mapsto f(t)$ defined as follows. Let $(W_t(dx))_{t \geq 0}$ be a planar white noise driven by an Ornstein-Uhlenbeck dynamics. More precisely, we consider a centered Gaussian process $\left( \int u dW_t \right)_{u,t}$ indexed by $(u,t) \in L^2(\R^2) \times \R_+$ with covariance
\[
\E \left[ \int u dW_t \int v dW_s \right] = e^{-|t-s|} \int uv \, .
\]
For every $t \geq 0$, let
\[
f(t) = q \star W_t \, .
\]
As shown in the appendix, if we assume that $q$ satisfies Condition \ref{cond:weak}, then there exists a modification of $(t,x) \in \R_+ \times \R^2 \mapsto f(t,x)$ that is continuous. In the following, we consider such a modification. Note that, in the case of the Bargmann-Fock field, this dynamics is the same as the dynamics from Subsection~\ref{ss:BF}.
\begin{remark}\label{rem:coupl}
For any $t \geq 0$, one may realize the joint coupling $(f(0), f(t))$ as follows:
\[
f(t) = e^{-t} f(0) + \sqrt{1-e^{-2t}} \,  \widetilde{f}\,,
\]
where $\widetilde{f}$ is an independent copy of $f(0)$. 
\end{remark}
The following theorem generalizes Theorem \ref{th.main}.
\begin{theorem}\label{thm:mainbis}
The content of Theorem \ref{th.main}  extends to any $q$ satisfying Conditions \ref{cond:sym}, \ref{cond:FKG} and \ref{cond:pol} for some $\beta > 2$ (with constants $c$ and $\alpha$ that may depend on $q$).
\end{theorem}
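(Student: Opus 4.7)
The plan is to follow the proof of Theorem \ref{th.main} and verify that each ingredient has an analogue for fields $f = q \star W$ satisfying Conditions \ref{cond:sym}, \ref{cond:FKG} and \ref{cond:pol} with $\beta > 2$. The overall strategy is: (i) decompose the observable $\mathbf{1}_{f \in \cross_0(nQ)}$ along the Wiener chaos, $\mathbf{1}_{f \in \cross_0(nQ)} = \sum_{k \geq 0} T_k$; using that the Ornstein--Uhlenbeck dynamics acts on the $k$-th chaos by the multiplier $e^{-kt}$, the covariance in the theorem equals $\sum_{k \geq 1} e^{-kt_n}\|T_k\|_{L^2}^2$; (ii) discretize the white noise $W$ into independent Gaussians indexed by small cells of mesoscopic side $\eps_n = n^{-\gamma}$, and apply a Schramm--Steif type bound adapted to the Wiener chaos setting, so that any randomized algorithm determining the crossing event with revealment $\delta_n$ yields $\sum_{k \leq K}\|T_k\|_{L^2}^2 \lesssim K \delta_n$; (iii) design an exploration algorithm whose revealment decays as a power of $n$, and conclude.

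First I would isolate which steps use properties specific to Bargmann--Fock. Step (i) is abstract Gaussian/Wiener chaos theory and is insensitive to the choice of $q$. Step (ii) only requires that one realize $f$ via i.i.d.\ Gaussian coordinates through the white noise $W$, which is precisely what the representation $f = q \star W$ provides. Step (iii) is where $q$ genuinely enters: one needs RSW, the FKG inequality, and polynomial arm-exponent estimates to bound, uniformly in the cell being queried, the probability that the exploration looks at that cell. Under Condition \ref{cond:sym}, Condition \ref{cond:FKG}, and Condition \ref{cond:pol} with $\beta>2$, these geometric inputs are already established in \cite{BG, BGbis, RVb, MV}, so the required ingredients can be imported off the shelf.

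The exploration algorithm I would use is the natural Gaussian analogue of the Schramm--Steif / Steif--Tassion algorithm for Bernoulli or Voronoi percolation: pick a uniform point on the left boundary of $nQ$, reveal the cells of the discretized $W$ one at a time, follow the interface between the sign-components of $f$, and stop when the exploration reaches the right boundary or is forced to disconnect. Each time the algorithm queries a cell at distance $r$ from the boundary of $nQ$, the probability it is queried is dominated by a four-arm event of radii $\eps_n$ and $r$, which by the polynomial arm-exponent estimates is at most a power of $\eps_n/r$; summing over possible locations yields a revealment bounded by $n^{-\alpha_0}$ for some $\alpha_0>0$.

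The main obstacle is the non-local dependence of $f$ on $W$ through the polynomial kernel $q$: in contrast to the exponential decay of Bargmann--Fock, here the value of $f$ at a point genuinely depends on $W$ far away, so the algorithm is not strictly local. To deal with this I would truncate $q$ to a ball of radius $n^{\eta}$, use Condition \ref{cond:pol} with $\beta > 2$ to show that the resulting perturbation of $f$ has $L^\infty$ variance $o(1)$ on $nQ$ (hence does not affect the crossing event with overwhelming probability), and then quantify how this truncation interacts with both the four-arm decay and the revealment bound. Optimizing the discretization scale $\eps_n$ and the truncation scale $n^{\eta}$ as small powers of $n$ should produce the polynomial exponent $\alpha = \alpha(q) > 0$ claimed in the theorem.
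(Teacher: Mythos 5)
Your overall architecture (truncate the kernel, discretize the white noise, apply a Schramm--Steif revealment bound, control the revealment by polynomial arm estimates) is indeed the paper's strategy, but three of your key steps have genuine gaps as written. First, the algorithm and its revealment analysis: following ``the'' interface from a uniform point of the left boundary does not obviously determine a quad-crossing event (a single interface determines the crossing only if it is started at a marked corner of the quad, and then cells near that corner are revealed with probability of order one), and the event that a given bulk cell is visited by an exploration interface is a two-arm event, not a four-arm event; four arms govern pivotality, which is irrelevant for revealment. Moreover, none of \cite{BG,BGbis,RVb,MV} contains four-arm (or two-arm) estimates; the only quantitative arm input available is the one-arm polynomial bound (Theorem 4.7 of \cite{MV}). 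The paper's algorithm is built precisely so that this suffices: choose uniformly one of the $\Theta(n^{1-\gamma})$ vertical lines at spacing $n^{\gamma}$, reveal all bits within distance $n^\gamma/2$ of it, and explore every positive cluster meeting it; a cell is then revealed only if a positive arm joins its $n^\gamma$-neighbourhood to the random line, so the revealment is bounded by an average of one-arm probabilities $\Pro[\arm_0(cn^\gamma,(k-1)n^\gamma/c)]$, which decays polynomially. (One could salvage your bound by dominating any arm event by the one-arm event, but the boundary/starting-point issue in your algorithm remains.)

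Second, the ``Schramm--Steif bound adapted to the Wiener chaos setting'' is asserted, not available off the shelf: Theorem \ref{thm:SchSt} concerns the subset-indexed Fourier--Walsh decomposition of a function of independent bits, not the decomposition by chaos degree of a functional of Gaussian coordinates, and the OU multiplier $e^{-kt}$ acts on degree, not on subset size. The paper sidesteps this by discretizing the white noise into genuine $\pm1$ Bernoulli bits at mesh $1/N$, applying \cite{SS} verbatim to the Boolean function $g_{n,N}$, and then letting $N\to\infty$ at fixed $n$; this limit is itself nontrivial and requires the convergence in law $(W_0^N,W_t^N)\to(W_0,W_t)$ in $H^{-1-\varepsilon}$, a Skorokhod coupling giving uniform convergence of $f^N_{n^\gamma}$ on $nQ$, and the a.s. stability of crossing events under small uniform perturbations (Appendices \ref{s:sobo} and \ref{s.reg}); none of this appears in your proposal. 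Third, your truncation step is too quick: at criticality, an $o(1)$ (even polynomially small) sup-norm perturbation of the field does not by itself imply that the crossing event is unchanged with high probability; one needs a quantitative stability estimate, namely Proposition \ref{p:trunc} (Proposition 3.11 and Corollary 3.7 of \cite{MV}), which uses monotonicity and Condition \ref{cond:pol} with $\beta>2$, and which forces the truncation radius $n^\gamma$ with $\gamma>1/(\beta-1)$. With the random-line cluster algorithm, the one-arm input, the Bernoulli discretization plus limiting argument, and the quantitative truncation estimate in place, your outline matches the actual proof.
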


In Section \ref{s:PT}, we prove a sharp threshold result by relying on the above noise sensitivity result as well as an idea originating from \cite{BKS} which requires the analysis carried in Section \ref{s.cor}. Before stating our result, here is a short overview of the current ``state of the art'' on the phase-transition of planar Gaussian fields. Assume that $q$ satisfies Conditions \ref{cond:sym}, \ref{cond:FKG} and \ref{cond:pol} for some $\beta > 2$. Then,
\begin{itemize}
\item Theorem 1.11 of \cite{MV} (see \cite{BG,BM,RVa} for the same result with stronger assumptions on $\beta$). For every quad $Q$, there exists $c=c(Q,q) \in ]0,1[$ such that, for every $s > 0$, $c \leq \Pro \left[ \cross_0(sQ) \right] \leq 1-c$;
\item Theorem 1.7 of \cite{R} (see \cite{MV} for the same result with the less general assumption $q \geq 0$ instead of Condition \ref{cond:FKG}: $q \star q \geq 0$).\footnote{Actually, the results from \cite{R} are more general in the sense that they require less regularity conditions on $q$. However, in the present paper, our focus is more on positivity conditions and conditions about the speed of decay of $q$.} For every $\ell > 0$ and every quad $Q$, there exists $c=c(\ell,Q,q)>0$ such that, for every $s>0$, $\Pro \left[ \cross_{-\ell}(sQ) \right] \leq e^{-cs}$ and $\Pro \left[ \cross_\ell(sQ) \right] \geq 1 -e^{-cs}$;
\item Theorem 1.15 of \cite{MV}. If we assume furthermore that $q \geq 0$, then the near-critical window is polynomially small in the sense that there exists $\alpha=\alpha(q) > 0$ such that, for every quad $Q$, $1-\Pro \left[ \cross_{s^{-\alpha}}(sQ) \right]$ and $\Pro \left[ \cross_{-s^{-\alpha}}(sQ) \right]$ go to $0$ as $s \rightarrow +\infty$.
\end{itemize}

As in the case of the Bargmann-Fock field, the two first items imply that i) if $\ell \leq 0$ then a.s. $\{ f > -\ell \}$ has no unbounded component and ii) if $\ell > 0$, then $\{ f > -\ell \}$ has a unique unbounded component.

In the present paper, we generalize the above results by combining in some sense the conclusions of \cite[Theorem 1.15]{MV} with those of \cite[Theorem 1.7]{R}. I.e. i) we obtain a new proof of the sharp threshold result from \cite{R} (see Section \ref{s:PT}), and ii) we obtain that the near-critical window is polynomially small (see Theorem \ref{thm:pol} below) when $q \star q \geq 0$ (rather than $q \geq 0$).
\begin{theorem}\label{thm:pol}
If $q$ satisfies Conditons \ref{cond:sym}, \ref{cond:FKG} and \ref{cond:pol} for some $\beta > 2$, then there exists $\alpha=\alpha(q) > 0$ such that, for every quad $Q$, $1-\Pro \left[ \cross_{s^{-\alpha}}(sQ) \right]$ and $\Pro \left[ \cross_{-s^{-\alpha}}(sQ) \right]$ go to $0$ as $s \rightarrow +\infty$.
\end{theorem}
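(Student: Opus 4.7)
My plan is to follow the Benjamini--Kalai--Schramm philosophy: combine the quantitative noise sensitivity supplied by Theorem~\ref{thm:mainbis} with a Russo-type formula in the level parameter $\ell$ in order to establish a polynomial lower bound on $\tfrac{d}{d\ell}\Pro[\cross_\ell(sQ)]$, which then integrates to the claimed polynomial window.

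Set $F_s=\mathbf{1}_{\cross_0(sQ)}$ and consider its Wiener chaos decomposition $F_s=\sum_{n\ge 0}J_nF_s$ with respect to the underlying white noise $W$. Since OU acts on the $n$-th chaos by $e^{-nt}$ (Mehler's formula), Theorem~\ref{thm:mainbis} at time $t_s=s^{-\alpha}$ gives
\begin{equation*}
\sum_{n\ge 1}e^{-n s^{-\alpha}}\,\|J_nF_s\|_{L^2}^2 \;=\;\Cov{F_s(f(0)),F_s(f(t_s))}\;\le\; c\,s^{-\alpha}.
\end{equation*}
Combined with $\Var{F_s}\ge c_0>0$ (non-degeneracy of the crossing event, from the box-crossing estimates of \cite{MV}), this forces $\sum_{n\le \eps s^\alpha}\|J_nF_s\|^2\le c\,s^{-\alpha}$, so that for some small $\eps>0$ and all $s$ large, $\sum_{n>\eps s^\alpha}\|J_nF_s\|^2\ge c_0/2$: the chaos mass of $F_s$ is concentrated at levels $n\gtrsim s^\alpha$.

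The next step converts this high-level chaos mass into a large derivative in $\ell$. Because $F_s$ depends only on $f|_{sQ}$, the shift $f\mapsto f+\ell$ corresponds to a Cameron--Martin perturbation in direction $\mathbf{1}_{sQ}$ (after a harmless truncation), which acts on the $n$-th chaos with an amplification factor of order $\sqrt{n}$. Relying on the preparatory analysis of Section~\ref{s.cor} to justify this spectral interpretation of a constant shift, one obtains a Russo-type bound of the form
\begin{equation*}
\frac{d}{d\ell}\Pro[\cross_\ell(sQ)]\Big|_{\ell=0}\;\ge\;c_1\sum_{n\ge 1}\sqrt{n}\;\|J_nF_s\|_{L^2}^2 \;-\; R_s,
\end{equation*}
where the remainder $R_s$, coming from the truncation near $\partial(sQ)$, is controlled via the polynomial decay Condition~\ref{cond:pol} for $\beta>2$. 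Plugging in the preceding chaos mass estimate gives $\tfrac{d}{d\ell}\Pro[\cross_\ell(sQ)]|_{\ell=0}\gtrsim s^{\alpha/2}$, and the same lower bound persists uniformly for $|\ell|\le s^{-\alpha/3}$ (the covariance structure of the shifted field is independent of $\ell$, so the chaos mass bound transfers to $f+\ell$).

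Integrating this differential inequality over $[0,s^{-\alpha/3}]$ (respectively $[-s^{-\alpha/3},0]$) then drives $\Pro[\cross_\ell(sQ)]$ arbitrarily close to $1$ (resp.\ $0$), yielding Theorem~\ref{thm:pol} after renaming $\alpha\leftarrow \alpha/3$. The main obstacle, and the reason the auxiliary analysis of Section~\ref{s.cor} is needed, is that the constant shift by $\ell$ is \emph{not} a Cameron--Martin direction for the full field $f=q\star W$ on $\R^2$ (since $1\notin L^2(\R^2)$), so ordinary Gaussian integration by parts does not apply; the role of Section~\ref{s.cor} is to supply an intrinsic chaos-level interpretation of this derivative, playing in the Gaussian setting the role of the BKS lower bound on total influence in terms of high-frequency Fourier--Walsh energy for the Bernoulli case.
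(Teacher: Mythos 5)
Your overall architecture (quantitative noise sensitivity $\Rightarrow$ polynomial sharp threshold) is the right philosophy, but the step that carries all the weight is unjustified and, as stated, almost certainly wrong. The claimed Russo-type bound $\frac{d}{d\ell}\Pro[\cross_\ell(sQ)]\big|_{\ell=0}\ge c_1\sum_{n\ge1}\sqrt{n}\,\|J_nF_s\|_{L^2}^2-R_s$ does not follow from anything in Section \ref{s.cor}: Proposition \ref{p.Wiener} is only the identity $\Var{\Pb{f(t)\in A\md f(0)}}=\Cov{1_{f(0)\in A},1_{f(2t)\in A}}$ and says nothing about level shifts. Moreover the heuristic behind your $\sqrt n$ amplification does not produce a lower bound of this kind: for a Cameron--Martin direction $h\in L^2$, $\frac{d}{d\ell}\E\left[F(W+\ell h)\right]\big|_{\ell=0}=\E\left[F\int h\,dW\right]$, i.e.\ only the \emph{first} chaos of $F$ contributes (the directional derivatives of the higher chaoses are centered, so their $\sqrt n$-sized $L^2$ norms are irrelevant to the derivative of a probability). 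In the Bernoulli case the derivative is linked to high-frequency mass only through the chain ``Margulis--Russo $=$ total influence $=\sum_S|S|\hat g(S)^2$'', and the second equality rests on the coincidence, special to Boolean functions, between the $\ell^1$ and $\ell^2$ forms of the influence; this is precisely the $L^1$-versus-$L^2$ obstruction discussed in the paper's methodological subsection, and it has no direct continuum Gaussian analogue (an indicator is not Malliavin differentiable, and the $L^1$-type quantity produced by a Russo formula is not bounded below by the $L^2$-type quantity $\sum_n n\|J_nF_s\|^2$). Your uniformity claim over $|\ell|\le s^{-\alpha/3}$ is also unsupported, since Theorem \ref{thm:mainbis} (and the arm-event/revealment input behind it) is only proved at the critical level $0$.

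The paper derives Theorem \ref{thm:pol} by a much softer, dynamical route that avoids any derivative in $\ell$: by Proposition \ref{p:dyna} (the BKS switching argument, which uses Proposition \ref{p.Wiener} together with the Markov property of $t\mapsto f(t)$) and Theorem \ref{thm:mainbis}, with high probability there exists $t\in[0,n^{-\alpha}]$ with $f(t)\in\cross_0(nQ)$; then the appendix estimate (Lemma \ref{lem:goalapp} with $a=1/2$, plus a union bound over unit boxes of $nQ$) shows that $M(\alpha)=\sup_{x\in nQ,\,t\in[0,n^{-\alpha}]}|f(t,x)-f(0,x)|$ is at most $O(n^{-\alpha/4})$ with high probability, so by monotonicity $f(0)\in\cross_{M(\alpha)}(nQ)$, giving $\Pro[\cross_{s^{-\alpha'}}(sQ)]\to1$; the statement at negative levels follows by duality. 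If you want to salvage your approach, you would need to actually prove a Gaussian substitute for the identity between the $\ell$-derivative and $\sum_n n\|J_nF_s\|^2$ (this is essentially what Rivera's Talagrand-type argument circumvents differently), whereas the paper's argument only needs the conditional-variance identity you already have plus control of the sup-norm displacement of the field over a short time interval.
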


\subsection{$L^2$ versus $L^1$ methods}
 
In this subsection, we wish to compare briefly the methods to prove sharpness results for such models from the present paper and \cite{RVb,MV,R}. In \cite{RVb,MV}, the main intermediate result is the proof that the following quantity goes to $+\infty$ uniformly in $\ell \in \R$
\begin{equation}\label{e:deriv_var}
\frac{d \Pro \left[ \cross_\ell(nQ) \right]}{d\ell} \times \frac{1}{\Pro \left[ \cross_\ell(nQ) \right] (1-\Pro \left[ \cross_\ell(nQ) \right])} \, .
\end{equation}
In \cite{RVb} and \cite{MV}, discretization procedures are used in order to apply respectively a KKL type theorem and the OSSS inequality.\footnote{The KKL theorem and the OSSS inequality have been used to detect phase transitions for numerous statistical physics models, see in particular \cite{BR} and \cite{OSSS}.} If $\cross_\ell^\varepsilon(nQ)$ is the discrete version of $\cross_\ell(nQ)$, it was roughly obtained - respectively in \cite{RVb} and \cite{MV} - that \eqref{e:deriv_var} was less than $O(1/(\sqrt{\log(n)} \varepsilon))$ and $O(n^{-c}/\varepsilon)$. In particular, these estimates are useless in the limit $\varepsilon \rightarrow 0$ so it was necessary to be very quantitative on the discretization procedure.

In \cite{R}, Rivera uses a Talagrand inequality - which is an analogue of the KKL theorem - and obtains an estimate (not on \eqref{e:deriv_var} but on another suitable quantity) \textbf{uniform in $\varepsilon$}. The fact that this estimate behaves well when one passes to the limit may come from the fact that Talagrand's inequality is an inequality on the $L^2$ norm of the gradient (while KKL type inequalities are estimates on the ``sum of influences'' which is the $L^1$ norm of the gradient for Gaussians, see \cite{KMS}), which may correspond more to the Gaussian setting. In the present paper, we also obtain an estimate uniform in $1/N$ (which has to be interpreted as the discretization mesh $\varepsilon$), see Step~4 in the proof of Theorem \ref{th.main} written in Section \ref{s.proof}. Similarly, we can interprete this by noting that the Schramm-Steif theorem (see Subsection \ref{ss:Sch-St}) is an $L^2$ estimate whereas the OSSS inequality - which is also an estimate involving randomized algorithms - is an estimate on the $L^1$ norm of the gradient.

Let us finally note, in the case of planar percolation models, that KKL and Talagrand type inequalities give a logarithmic upper bound on the size of the near-critical window while the OSSS and Schramm-Steif inequalities give sharper polynomial upper-bounds.

\subsection{The main tool: the Schramm-Steif randomized algorithm theorem}\label{ss:Sch-St}

The main tool of our proof is the Schramm-Steif randomized algorithm theorem \cite{SS}. Let us recall this result. We refer to \cite{book} for more details on Boolean functions and noise sensitivity. Let $n \in \N^*$ and consider the hypercube $\Omega_n =\{-1,1\}^n$. We equip $\Omega_n$ with the uniform probability measure $P$ and we consider the Fourier-Walsh basis $(\chi_S)_{S \subseteq \{1,\cdots,n\}}$ which is the orthonormal basis of $L^2(\Omega_n,P)$ defined by
\[
\chi_S(\sigma) = \prod_{i \in S} \sigma_i \, .
\]
Every function $g \, : \, \Omega_n \rightarrow \R$ can be decomposed in a unique way as
\[
g = \sum_{S \subseteq \{1,\cdots,n\}} \widehat{g}(S) \chi_S \, .
\]
If the dynamics $t \mapsto \sigma(t)$ is defined by sampling a configuration $\sigma(0) \sim P$ and by resampling each bit independently at rate $1$, then we have
\[
\Cov{ g(\sigma(0)),g(\sigma(t)) } = \sum_{S \neq \emptyset} \widehat{g}(S)^2 e^{-t|S|} \, .
\]
Let us recall that a sequence of Boolean function $g_n \, : \, \Omega_{m_n} \rightarrow \{0,1\}$ for some sequence $(m_n)_{n \in \N}$ that goes to $+\infty$ is said noise sensitive (\cite{BKS}) if
\[
\forall t>0, \, \Cov{ g_n(\sigma(0)),g_n(\sigma(t)) } \underset{n \rightarrow +\infty}{\longrightarrow} 0 \, .
\]
Noise sensitivity has been proved for discrete percolation \cite{BKS,SS,GPS} and for some continuous percolation models such as the Poisson Boolean model \cite{ABGM} and Voronoi percolation \cite{AGMT,AB}. In these two last works, the approach relies on a result by Schramm and Steif. In order to state Schramm-Steif theorem, we need to recall what is a (randomized) algorithm. If $g \, : \, \Omega_n \rightarrow \R$, a randomized algorithm that determines $g$ is a procedure that asks the values of the bits $i \in \{1,\cdots,n\}$ step by step where at each step the algorithm can ask for the value of one or several bits and the choice of the new bit(s) to ask is based on the values of the bits previously queried. The first bit may be random (and extra randomness can be used to decide what is the next bit queried but we do not need this in the present paper). We also ask that the algorithm stops once $g$ is determined. The revealment of the algorithm is the supremum on every bit $i$ of the probability that $i$ is required by the algorithm. The revealment $\delta(g)$ of $g$ is the infimum of the revealements of all the algorithms that determine $g$.
\begin{theorem}[\cite{SS}]\label{thm:SchSt}
For every $g \, : \, \Omega_n \rightarrow \R$ and every $k \in \N^*$, we have
\[
\sum_{S \, : \, |S|=k} \widehat{g}(S)^2 \leq k\delta(g) \int g^2 dP \, .
\]
\end{theorem}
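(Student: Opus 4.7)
The statement is a restatement of a result of Schramm and Steif; I outline their strategy. Conditioning on the algorithm's internal randomness (and applying Jensen's inequality to the bound we seek), we may reduce to the case of a deterministic algorithm $A$. For $\sigma \in \Omega_n$, write $J(\sigma) \subseteq \{1,\ldots,n\}$ for the (now deterministic) set of bits queried by $A$ on input $\sigma$, so that $g(\sigma)$ is measurable with respect to $(J(\sigma), \sigma_{J(\sigma)})$; the revealment assumption reads $\Pro[i \in J] \leq \delta(g)$ for every $i$.

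The first step would be to establish the Fourier identity
\[
\widehat g(S) \;=\; \E\bigl[g(\sigma)\, \chi_S(\sigma)\, \mathbf{1}_{S \subseteq J(\sigma)}\bigr]
\]
for every $S \subseteq \{1,\ldots,n\}$. This follows from the measurability of $g$ with respect to $(J,\sigma_J)$ combined with the observation that, conditionally on $(J,\sigma_J)$, the unqueried bits are i.i.d.\ uniform, so $\E[\chi_S \mid J,\sigma_J] = \chi_S \mathbf{1}_{S \subseteq J}$ (the contribution from an unqueried bit $i \in S$ vanishes in expectation).

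Next, I would decompose $g - \E g$ as a sum of martingale differences along the filtration $(\mathcal F_t)_{t\geq 0}$ generated by the first $t$ adaptive queries (including their labels and values). Writing $d_t = \E[g\mid \mathcal F_t] - \E[g \mid \mathcal F_{t-1}] = \Delta_t\, \sigma_{\pi_t}$, where $\pi_t$ is the $t$-th queried bit and $\Delta_t$ is $\mathcal F_{t-1}$-measurable, the Fourier identity above implies $\widehat{d_t}(S) \neq 0$ only when step $t$ \emph{completes} $S$, meaning $\pi_t \in S$ and $S \setminus \{\pi_t\} \subseteq \pi_{<t}$. Summing $\widehat{d_t}(S) = \widehat{g}(S)$ over $t$ thus collapses to a single term at the completion step $j_S$, and Cauchy--Schwarz combined with $\Pro[S \subseteq J] \leq \delta(g)$ gives $\widehat g(S)^2 \leq \delta(g) \cdot \E[\Delta_{j_S}^2 \mathbf{1}_{S \subseteq J}]$.

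Summing over $|S|=k$ and invoking the martingale orthogonality $\sum_t \E[\Delta_t^2] = \|g-\E g\|_2^2 \leq \|g\|_2^2$, together with a combinatorial count of the $k$-subsets completed at each step, is expected to produce the claimed factor $k$. \textbf{The main obstacle} is precisely this final step: a naive Cauchy--Schwarz bound applied directly to the Fourier identity loses a factor $\binom{n}{k}$, so exploiting the sequential structure of the adaptive algorithm (only one new bit is revealed per step, and each $k$-set is ``completed'' at exactly one step) is essential to collapse the combinatorial factor down to the linear bound~$k$.
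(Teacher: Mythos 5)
The paper does not prove this statement; it quotes it from [SS] (see also the book [GS14] cited in Subsection 1.4), so your sketch has to be measured against the original Schramm--Steif argument. Two of your ingredients are indeed correct and central: the identity $\widehat{g}(S)=\E\bigl[g\,\chi_S\,\mathbf{1}_{S\subseteq J}\bigr]$ (because conditionally on the algorithm's execution the unqueried bits are uniform), and the martingale decomposition along the filtration of the queries. But your opening reduction is not available: the revealment is $\max_i \Pro[i\in J]$ with the algorithm's internal randomness \emph{averaged in}, and conditioning on that randomness can only increase this maximum; no Jensen-type argument recovers the bound (an algorithm that queries bit $1$ or bit $2$ according to a fair coin has revealment $1/2$, while every deterministic instance has revealment $1$). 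This is not a pedantic point for the present paper: the algorithm of Section 2 has small revealment \emph{only} because of the uniformly random choice of the starting line, and each deterministic instance has revealment of order $1$. The harmless fix is to keep the randomized algorithm and put its external randomness into $\mathcal{F}_0$; your Fourier identity and the martingale structure then hold verbatim, with $\Pro[i\in J]\leq\delta(g)$ now meaning probability over both sources.

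The genuine gap is the final step, which you yourself flag as the main obstacle and leave as ``expected'' to produce the factor $k$: it does not. Your intermediate bound $\widehat{g}(S)^2\leq\delta(g)\,\E\bigl[\Delta_{j_S}^2\,\mathbf{1}_{S\subseteq J}\bigr]$ is correct (one checks $\widehat{g}(S)=\E\bigl[\mathbf{1}_{S\subseteq J}\,d_{j_S}\chi_S\bigr]$ by conditioning on $\mathcal{F}_{s-1}$ at the later times $s$), but summing it over $|S|=k$ charges each step $t$ with $\E[\Delta_t^2]$ multiplied by the number of $k$-sets completed at step $t$, which is $\binom{|J_{t-1}|}{k-1}$ (any $k$-set consisting of $\pi_t$ together with $k-1$ previously queried bits), not $k$; moreover the same aggregated increment $\Delta_t$ is reused for every such $S$. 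So this route yields a combinatorial factor of order $|J|^{k-1}$ and only works for $k=1$. The missing idea in [SS] is of a different nature: introduce the degree-$k$ projection $h=\sum_{|S|=k}\widehat{g}(S)\chi_S$, use that the algorithm determines $g$ to write $\sum_{|S|=k}\widehat{g}(S)^2=\E[g\,h]=\E\bigl[g\,\E[h\mid\mathcal{F}_\tau]\bigr]\leq\|g\|_2\,\bigl\|\E[h\mid\mathcal{F}_\tau]\bigr\|_2$ (this duality step is also why the full norm $\int g^2\,dP$ appears in the statement), and then prove separately that $\E\bigl[\E[h\mid\mathcal{F}_\tau]^2\bigr]\leq k\,\delta(g)\,\E[h^2]$; there the revealment enters through the per-bit probabilities $\Pro[i\in J]\leq\delta(g)$ and the factor $k$ comes from accounting each $S$ once per element, via $\sum_i\sum_{S\ni i}\widehat{h}(S)^2=k\sum_{|S|=k}\widehat{h}(S)^2$, not from counting the sets completed at a given step. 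Without the projection/duality step and this per-element accounting, the argument stalls exactly where you stopped.
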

We will use this theorem as follows: in Section \ref{s.proof}, we will approximate the white noise by a discrete white noise with $\pm 1$ bits and we will observe that running the above dynamics on the bits of the discrete white noise is the same (in the limit) as the Ornstein-Uhlenbeck dynamics from Section \ref{ss:gen}. Applying Schramm-Steif theorem to the $\pm 1$ bits and estimating the revealment thanks to estimates from \cite{MV} will give the noise sensitivity result. Actually, in order to define a suitable algorithm, we will have to work with a truncated (i.e. finite range) version of our field.

\subsection{A motivation: exceptional times and exceptional planes}\label{ss.motiv}

Our initial motivation in studying the noise sensitivity of Bargmann-Fock percolation was not our above application to sharp thresholds (Theorem \ref{thm:pol}) but rather to establish the existence of \textbf{exceptional times} for different natural dynamics on Bargmann-Fock percolation on $\R^2$ which are listed below. One of the long-term goals is to prove that, if one considers a Bargmann-Fock field in dimension $3$, then a.s. there exist ``exceptional'' planes $P \subset \R^3$ in which there is an unbounded nodal line (see Conjecture \ref{c.2} for a more precise statement).

In each case, as we explain below we still miss at least one key ingredient in order to prove the existence of exceptional times. 

\medskip

\ni
\textbf{$i)$ Ornstein-Uhlenbeck dynamics on Bargmann-Fock}. We already considered this dynamics above. Using the above notations, recall it can be defined as 
$f(t,x_1,x_2)=e^{-\frac{1}{2}|x|^2} \sum_{i,j\in \N} \frac{a_{i,j}(t)}{\sqrt{i! j!}} x_1^i x_2^j$ or as $f(t)=q\star W_t$ where $q(x)=(2/\pi)^{1/4}e^{-|x|^2}$. 

From \cite{Alex,BG}, it is known that for any fixed $t$, a.s. there is no unbounded connected component neither in $\calC_t:=\{x\in \R^2, f(t,x) > 0 \}$ nor in $\calC_t^*:=\{x\in \R^2, f(t,x) = 0 \}$. Our motivation was to study the question of existence of \textbf{exceptional times} that can be defined for instance as the (random) times $t$ at which there is an unbounded component in $\calC_t$ or those for which this is the case for $\calC_t^*$ (i.e. the times at which there is an unbounded nodal line). By analogy with site-percolation on the triangular grid, we conjecture that the following happens. 

\begin{conjecture}\label{c.1}
A.s. exceptional times at which there exists an unbounded component in $\calC_t$ do exist. Moreover, a.s. the Hausdorff dimension of this random set of times  is $\frac{67} {72}$ (N.B. for dynamical percolation on the triangular lattice, it was shown in \cite{GPS} that the analogous dimension is a.s. $\frac {31} {36}$).

Furthermore, exceptional times at which there exists an unbounded nodal line also exist a.s. and the corresponding Hausdorff dimension is $\frac{5}{6}$ (N.B. for dynamical percolation on the triangular lattice, it was shown in \cite{SS,GPS} that the analogous dimension is an a.s. constant that lies in $[\frac{1}{9},\frac {2} {3}]$ and it is conjectured that this constant is $\frac{2}{3}$).
\end{conjecture}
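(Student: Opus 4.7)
The plan is to transplant to Bargmann--Fock percolation the Schramm--Steif \cite{SS} and Garban--Pete--Schramm \cite{GPS} strategy for producing and dimensioning exceptional times in dynamical percolation. For $N \geq 1$ let $Y_N(t) := \mathbf{1}\{0 \leftrightarrow \partial B_N \text{ inside } \calC_t\}$ be the one-arm indicator under the Ornstein--Uhlenbeck dynamics, and set $X_N := \int_0^1 Y_N(t)\, dt$. The existence of exceptional times with an unbounded component in $\calC_t$ reduces, by monotonicity of $\{\exists\, t \in [0,1] : Y_N(t) = 1\}$ in $N$ together with a covering argument, to showing $\Pro[X_N > 0] \geq c > 0$ uniformly in $N$, which in turn follows from the Paley--Zygmund inequality $\Pro[X_N > 0] \geq \E[X_N]^2 / \E[X_N^2]$.

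The first moment is simply $\E[X_N] = \pi_1^{\mathrm{BF}}(N)$, the Bargmann--Fock one-arm probability at the critical level $\ell_c = 0$. The second moment requires a dynamical two-point estimate of the form
\[
\Pro\bigl[Y_N(0) = Y_N(t) = 1\bigr] \ls \pi_1^{\mathrm{BF}}(N)^2 \cdot \Phi(t, N),
\]
where by analogy with \cite{GPS} one expects $\Phi(t,N) \asymp 1/\pi_4^{\mathrm{BF}}(1/\sqrt{t},\, N)$, with $\pi_4^{\mathrm{BF}}$ the four-arm probability. The heuristic is standard: at time $t$ the OU dynamics has effectively resampled the field inside boxes of radius $\ls 1/\sqrt{t}$, so beyond this scale the two configurations decorrelate, while at smaller scales the common one-arm event must avoid four-arm pivotal annuli. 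Establishing this bound rigorously requires (i) quasi-multiplicativity of arm probabilities for Bargmann--Fock at $\ell_c = 0$, (ii) a local arm-event version of Theorem \ref{th.main} --- producing a revealment estimate for arm events rather than for crossings of fixed quads --- and (iii) a continuous analogue of Russo's formula identifying the OU derivative of $\Pro[Y_N(t) = 1]$ with an expectation over four-arm pivotals.

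With the two-point estimate in hand, the Hausdorff dimension of the exceptional set is extracted by the Frostman--Taylor energy method. For the renormalised random measure $d\mu_N(t) := \pi_1^{\mathrm{BF}}(N)^{-1} Y_N(t)\, dt$, and for every $\gamma$ below the threshold determined by
\[
\int_0^1 \frac{dt}{|t|^\gamma\, \pi_4^{\mathrm{BF}}(1/\sqrt{t})} < \infty,
\]
one obtains a uniform bound on $\E\bigl[\iint |s-t|^{-\gamma}\, d\mu_N(s)\, d\mu_N(t)\bigr]$, extracts a non-trivial weak subsequential limit $\mu_\infty$, and concludes $\dim_{\mathrm H}(\mathrm{supp}\, \mu_\infty) \geq \gamma$ almost surely on the event $\{\mu_\infty \neq 0\}$. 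The matching upper bound comes from a first-moment argument covering exceptional times by dyadic time slabs at each scale $N$. The specific predictions $67/72$ for $\calC_t$ and $5/6$ for $\calC_t^*$ arise by inserting the conjectured Bargmann--Fock cluster and nodal-line arm exponents, respectively, into the above threshold.

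The overwhelmingly hardest obstacle is the identification of the $\pi_k^{\mathrm{BF}}$ with critical planar percolation arm exponents predicted by $\mathrm{SLE}_6$: at present even $\pi_1^{\mathrm{BF}}$ is not known to satisfy any polynomial bound with an explicit exponent. A more modest intermediate target --- corresponding only to the existence half of the conjecture, not the dimension --- requires only that $\pi_4^{\mathrm{BF}}(n)/\pi_1^{\mathrm{BF}}(n)^2$ decay polynomially, together with quasi-multiplicativity and an extension of the Schramm--Steif algorithm of Section \ref{s.proof} from global crossings of quads to local arm events at the origin. This extension --- adapting the truncation of the white noise and the algorithm to radial/arm geometry so as to yield a small revealment for arm events --- is already a serious technical hurdle, and is the natural first step toward any part of Conjecture \ref{c.1}.
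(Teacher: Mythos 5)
You have not proved the statement, and indeed no proof exists in the paper: Conjecture \ref{c.1} is stated as a conjecture, and the authors explicitly write that they are ``far from being able to prove'' it. What you have written is the natural Schramm--Steif/GPS program (second moment via Paley--Zygmund for $X_N=\int_0^1 Y_N(t)\,dt$, a dynamical two-point estimate, Frostman energy for the lower bound on the dimension, a first-moment covering for the upper bound), and to your credit you identify the same obstructions the paper itself lists in Subsection \ref{ss.motiv}: no known arm exponents or even polynomial one-arm versus four-arm comparisons for Bargmann--Fock, no quasi-multiplicativity or arm-separation technology, and the fact that the truncation $m_n=C\sqrt{\log n}$ forces any algorithm to reveal all white-noise bits within distance $m_n$ of a queried point, so the revealment bounds are strictly weaker than in the lattice case (where even for $\Z^2$ the SS route to exceptional times is not known to work). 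So the proposal is a roadmap with the decisive steps (i)--(iii) left as hypotheses, not an argument; it establishes neither existence nor the dimension.

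Beyond the incompleteness, the quantitative heuristic you give does not reproduce the conjectured values, so even as a heuristic it needs correction. Your energy threshold $\int_0^1 t^{-\gamma}\,\pi_4^{\mathrm{BF}}(1/\sqrt{t})^{-1}\,dt<\infty$, with the conjectured four-arm exponent $5/4$, gives $\pi_4^{\mathrm{BF}}(1/\sqrt t)\approx t^{5/8}$ and hence the threshold $\gamma<3/8$, not $67/72$. The paper's heuristic is different: the O.U.\ dynamics at time $t$ acts like a level shift of order $\sqrt{t}$ (this is the role of $\lambda(\eps)=\eps^{1/2-a}$ in Appendix \ref{s.appendix}), so decorrelation sets in when $t^{-1/2}\approx R^2\alpha_4^{\mathrm{BF}}(R)$, giving a dynamical correlation length $L(t)\approx t^{-2/3}$ (not a resampled box of radius $1/\sqrt t$, which is not the relevant spatial scale). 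The GPS-type two-point function should then be $\alpha_1^{\mathrm{BF}}(N)^2/\alpha_1^{\mathrm{BF}}(L(t))\approx t^{-5/72}\,\alpha_1^{\mathrm{BF}}(N)^2$, i.e.\ the factor is $1/\pi_1$ at scale $L(t)$ rather than $1/\pi_4$ at scale $1/\sqrt t$, and the energy criterion then yields $\gamma<1-5/72=67/72$ (and $1-\tfrac23\cdot\tfrac14=5/6$ for the nodal-line case via the two-arm exponent). In short: the statement remains a conjecture, your outline coincides with the strategy the paper discusses and discards as currently out of reach, and the specific two-point/energy formula you propose must be replaced by the one-arm-at-correlation-length form to even be consistent with the conjectured dimensions.
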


We are far from being able to prove this conjecture. Here is why: already for the classical dynamical percolation model on the square lattice $\Z^2$, it is not known up to now how to prove the existence of exceptional times using the randomized algorithms techniques from \cite{SS} (the only proof for $\Z^2$ is provided in \cite{GPS} and would not extend easily to Bargmann-Fock). In fact the situation is worse for Bargmann-Fock than on $\Z^2$: indeed, in order to define a suitable algorithm on the white noise bits, we have to work with a finite range version of the Bargmann-Fock field. Let $m_n=C\sqrt{\log(n)}$ with $C$ very large. By truncating $q$ and by using estimates from \cite{MV} (see Subsection 3.4 therein), the Bargmann-Fock field can be approximated on a quad $nQ$ by a $m_n$-dependent field $f_{\text{trunc}}$, but we cannot obtain less spatial dependencies. As a result, the bound we can get on the revealment is not as good as for $\Z^2$ because, if one wants to reveal $f_{\text{trunc}}(x)$ then one has to reveal \textbf{all the bits of the white noise at distance $m_n$ from $x$}. Moreover, one does not have {\em separation of arms} tools for Bargmann-Fock percolation. Because of these reasons, it looks out of reach at the moment to prove exceptional times for this dynamical model.
\medskip

Let us now briefly explain why we expect a $67/72$-Hausdorff dimension instead of the classical one $31/36$.  There are two ways to see where the difference comes from. 1) If one is looking for an upper-bound on the Hausdorff dimension, then one may proceed exactly as in \cite{SS} by dividing the unit-interval of times $[0,1]$ into $\eps^{-1}$ intervals of length $\eps$. Then, on each of these intervals, one tries to have an upper bound on the probability that this interval contains an exceptional time. Usually one proceeds by relying on an easy stochastic domination. In this case though, one cannot hope to stochastically dominate $\bigcup_{0\leq u \leq \eps} \{x \in \R^2: f(u,x) > 0\}$ by $\{x \in \R^2: f(0,x) > -\lambda(\eps)\}$ for some small and well-chosen $\eps \mapsto \lambda(\eps)$ (this is due to the fact that at  large distances there will be arbitrary large fluctuations). Yet, Appendix \ref{s.appendix} suggests that for any $a>0$, $\lambda(\eps):=\eps^{1/2-a}$ would give an ``almost'' such stochastic domination. If one believes that later non-trivial fact plus the believed same universal behavior for the near-critical Gaussian percolation process $\ell \mapsto \{x : f(0,x)>-\ell\}$ as for site-percolation on the triangular lattice,\footnote{For site-percolation on the triangular lattice, it is known that at parameter $p_c+\eps$, the probability that the origin is in an unbounded cluster behaves like $\eps^{5/36}$, see \cite{SW}.} then we obtain that an $\eps$-interval of time should contain an exceptional time with probability at most $\eps^{1/2 \times 5/36}$. This implies the expected bound $1-5/(2 \times 36) = 67/72$ on the Hausdorff dimension (for more details on similar computations, see for instance the beginning of Section 6 of \cite{SS}). 2) The second reason is that by a close inspection of $\frac d {dt} \Pb{f(0), f(t) \in  \cross_0(RQ)}$,\footnote{The study of such a derivative in terms of pivotal events can be extracted from works of Piterbarg \cite{Pit}, see also the more recent approach in Section 2 of \cite{RVa} or the continuum analogue in \cite{BMR}.} it appears that decorrelation should start to happen when $t^{-1/2} \approx R^2 \alpha_4^{\mathrm{BF}}(R)$ where $\alpha_4^{\mathrm{BF}}(R)$ denotes the probability of the $4$-arm event from scale $1$ to scale $R$ for the Bargmann-Fock field (which is the event that there are $4$ paths of alternating sign from the ball $B_1(0)$ to $\partial B_R(0)$). By universality, $R^2 \alpha_4^{\mathrm{BF}}(R)$ is believed to be of order $R^{3/4}$. This computation suggests that the dynamical correlation length for this O.U. dynamics will be $t^{-2/3}$ (instead of $t^{-4/3}$ in Bernoulli percolation), which suggests that the upper-bound given just above might be the true Hausdorff dimension (indeed, this upper-bound equals $1-2/3 \times 5/48$ where $5/48$ is the exponent of the $1$-arm event, see \cite{LSW}).
\medskip

The study of the set of exceptional times at which there is an unbounded nodal line is harder since this event is not monotonic. However, similar observations as above but applied to the $2$-arm event (see Section 8 of \cite{SS} for instance) suggest that the Hausdorff dimension of this set of exceptional times is $1-2/3 \times 1/4=5/6$ (indeed, it has been proved in \cite{SW} that the exponent of the $2$-arm event for site-percolation on the triangular lattice is $1/4$).

\smallskip
\ni
\textbf{$ii)$ Exceptional planes for Bargmann-Fock field in $\R^3$.} 
 Consider now a 3D Bargmann-Fock field $F$ on $\R^3$, and for each $t\in \R$, let $f^\mathrm{hor}(t)$ be the two-dimensional Bargmann-Fock field obtained by restricting $F$ to the horizontal plane $\{(x,y,t)\}_{x,y\in \R}$. We are interested in the following (non-Markovian) dynamics:
\[
t \mapsto f^\mathrm{hor}(t)\,.
\]
It is easy to check that for every $t \in \R$ the joint coupling $(f^{\mathrm{hor}}(0),f^{\mathrm{hor} }(t))$ can be realized as follows:
\begin{align*}\label{}
f^\mathrm{hor}(t) & = e^{-\frac {t^2} 2} f^\mathrm{hor}(0) + \sqrt{1 - e^{-t^2}}\, \widetilde{f}^\mathrm{hor} \, ,
\end{align*}
where $\widetilde{f}^\mathrm{hor}$ is an independent copy of $f^\mathrm{hor}(0)$. In particular, we see here that this dynamics is locally much slower than the above O.U. dynamics on Bargmann-Fock.  
%This model is even harder to analyze than the above Ornstein-Ulhenbeck process, see the estimate \ref{e.??} below. 
%This slowing effect makes the analysis of this dynamics even more out of reach.
Despite this slowing down, we claim that a proof of the above conjecture would imply the following one:
\begin{conjecture}\label{c.2}
A.s. the Hausdorff dimension of the set of times at which there is an unbounded component in $\{ f^\mathrm{hor}(t) > 0 \}$ is $31/36$ (N.B. same as for the triangular lattice). Moreover, a.s. the Hausdorff dimension of the set of times at which there is an unbounded component in $\{ f^\mathrm{hor}(t) = 0 \}$ is $2/3$.
\end{conjecture}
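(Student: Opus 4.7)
The plan is to treat Conjecture \ref{c.2} as a (conditional) consequence of Conjecture \ref{c.1} via the explicit two-time coupling $f^{\mathrm{hor}}(t) = e^{-t^2/2} f^{\mathrm{hor}}(0) + \sqrt{1-e^{-t^2}}\, \widetilde{f}^{\mathrm{hor}}$ displayed just before the statement. Comparing with Remark \ref{rem:coupl}, the pair $(f^{\mathrm{hor}}(0), f^{\mathrm{hor}}(t))$ has exactly the same joint law as $(f(0), f(s(t)))$ for the Ornstein-Uhlenbeck dynamics, under the quadratic time change $s(t) = t^2/2$. Consequently every two-point statistic of crossing or nodal events under the 3D dynamics is read off from the corresponding O.U. quantity at the reparametrized time, even though $t \mapsto f^{\mathrm{hor}}(t)$ is itself non-Markovian.

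For the upper bounds on the Hausdorff dimensions, I would follow the Schramm-Steif scheme recalled in Subsection \ref{ss.motiv}. Partition $[0,1]$ into $\varepsilon^{-1}$ intervals of length $\varepsilon$ and bound the probability that a typical interval contains an exceptional time. The relevant \emph{dynamical correlation length} at time $t$, obtained by solving $t^{-1} \asymp R^2 \alpha_4^{\mathrm{BF}}(R)$ after applying the time change $s=t^2/2$, is upgraded from the O.U. scale $t^{-2/3}$ to the 3D scale $t^{-4/3}$. Plugging this into the (conjectured universal) one-arm exponent $5/48$ and two-arm exponent $1/4$ produces per-interval probabilities $\varepsilon^{(4/3)(5/48)}=\varepsilon^{5/36}$ and $\varepsilon^{(4/3)(1/4)}=\varepsilon^{1/3}$, which via a standard covering argument yield the upper bounds $1-5/36=31/36$ and $1-1/3=2/3$. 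The same exponents can be recovered from the alternative stochastic-domination route of Subsection \ref{ss.motiv}: since $\mathrm{Var}(f^{\mathrm{hor}}(t)-f^{\mathrm{hor}}(0))\asymp t^2$ rather than the O.U. rate $t$, the maximal fluctuation over an interval of length $\varepsilon$ is of order $\varepsilon^{1-a}$ instead of $\varepsilon^{1/2-a}$, producing the same quotient of exponents when combined with the conjectured $\lambda^{5/36}$ law for the existence of an unbounded cluster at level $\lambda$.

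For the matching lower bounds, I would run a Frostman-type energy argument on $T_n := \int_0^1 \mathbf 1_{f^{\mathrm{hor}}(t) \in \mathrm{Cross}_0(R_nQ)}\, dt$ for a well-chosen scale $R_n\to\infty$. The first moment is controlled by stationarity and by the quantitative input that Conjecture \ref{c.1} provides at the O.U. level. The second moment reduces, via the two-time identity above, to the O.U. two-point correlation $\mathbb P[f(0), f(s(|t_2-t_1|)) \in \mathrm{Cross}_0(R_nQ)]$ integrated against the Jacobian of the quadratic time change; carrying this integral through the standard energy inequality transfers the lower bound from Conjecture \ref{c.1} to the 3D dynamics with the correct shift $2/3 \mapsto 4/3$ of the relevant exponents. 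The non-Markovianity of $t\mapsto f^{\mathrm{hor}}(t)$ never intervenes since only pair correlations enter the first- and second-moment method.

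The main obstacle is of course Conjecture \ref{c.1} itself. As explained in Subsection \ref{ss.motiv}, the Schramm-Steif revealment bound for Bargmann-Fock is intrinsically weaker than for $\Z^2$ because the truncation radius $m_n \asymp \sqrt{\log n}$ forces all white-noise bits within distance $m_n$ of a revealed point to be queried, and no separation-of-arms result is currently available in the Gaussian setting. Any unconditional proof of Conjecture \ref{c.2} therefore requires, at a minimum, an analogue of the GPS machinery adapted to Gaussian percolation, which lies well beyond the techniques developed in the present paper; what the above plan delivers is the conditional statement that Conjecture \ref{c.1} implies Conjecture \ref{c.2} with the Hausdorff dimensions $31/36$ and $2/3$ as claimed.
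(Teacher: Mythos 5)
Your proposal follows essentially the same route as the paper's own (heuristic, conditional) discussion in Subsection \ref{ss.motiv}: the quadratic time change $s=t^2/2$ identifying $(f^{\mathrm{hor}}(0),f^{\mathrm{hor}}(t))$ with the O.U.\ pair, the resulting two-point function $\approx t^{-5/36}\alpha_1^{\mathrm{BF}}(R)^2$ (equivalently the correlation length upgrading from $t^{-2/3}$ to $t^{-4/3}$), and the conclusion that the dimensions $31/36$ and $2/3$ would follow from Conjecture \ref{c.1} together with the conjectured universal arm exponents. Since the statement is a conjecture, neither the paper nor your argument proves it unconditionally, and you correctly identify the same obstacles (revealment/truncation, lack of separation of arms) as the paper.
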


(Note this would imply in particular the existence of an unbounded nodal surface for $F$, Bargmann-Fock field on $\R^3$.)

Let us explain briefly why we expect this smaller dimension compared to the O.U. case and why this should follow from a proof of Conjecture \ref{c.1}. As explained in \cite{SS} for standard dynamical percolation, the value of the Hausdorff dimension follows directly from the knowledge of the ``two-point function'', $\Pb{0\overset{\omega_0}\longleftrightarrow \p B_R \text{ and }0\overset{\omega_t}\longleftrightarrow \p B_R}$, where $(\omega_t)_{t \geq 0}$ is a dynamical percolation process. In the case of the triangular grid, this two-point function is shown in \cite{GPS} to behave as $t^{-5/36} \alpha_1(R)^2$, where $\alpha_1(R)$ is the probability of the $1$-arm event. From the above discussion in $i)$, we  can expect that the two-point function for O.U. dynamics rather behaves as $(t^{1/2})^{-5/36} \alpha_1(R)^2$. If so, not only it would imply Conjecture \ref{c.1} but also, thanks to the above identity for the joint coupling $(f^{\mathrm{hor}}(0),f^{\mathrm{hor} }(t))$, it would imply 
\begin{align*}\label{}
\Pb{0 \overset{f^{\mathrm{hor}}(0)}\longleftrightarrow \p B_R \text{ and } 0 \overset{f^{\mathrm{hor}}(t)}\longleftrightarrow \p B_R} & = (t^{2/2})^{-5/36} \alpha_1^{\mathrm{BF}}(R)^2 \\
& = t^{-5/36}\alpha_1^{\mathrm{BF}}(R)^2 \,,
\end{align*}
where $\alpha_1^{\mathrm{BF}}(R)$ is the probability of the $1$-arm event for the Bargmann-Fock field. As in \cite{SS,GPS}, and if we believe same universal behavior of the probability of the $1$-arm event as for site-percolation on the triangular lattice, this estimate would readily imply the first part of Conjecture \ref{c.2}. Analogous (but harder because of the non-monotonicity of the $2$-arm event) arguments would imply the second part of this conjecture.
\medskip

Finally, in order to detect interesting exceptional times, let us point out that one may also try to integrate further on the angle of planes in the spirit of \cite{BS}.

\medskip
\paragraph{\textbf{Notations.}} We use the following notations for $\sigma$-algebras: $\mathcal{F}$ is the usual $\sigma$-algebra on the set of continuous functions from $\R^2$ to $\R$ ($\mathcal{F}$ is generated by the functions $u \mapsto u(x)$ for every $x \in \R^2$). Moreover, for every subset $D$ of $\R^2$, we let $\mathcal{F}_D$ denote the $\sigma$-algebra generated by $u \mapsto u(x)$ for every $x \in D$.

Finally, we denote by $O(1)$ a positive bounded function, by $\Omega(1)$ a positive function bounded away from $0$ and by $\Theta(1)$ a positive function bounded away from $0$ and $+\infty$.

\medskip
\ni
\textbf{Acknowledgments.}

\ni
We wish to thank Stephen Muirhead and Alejandro Rivera for very fruitful discussions about randomized algorithms in the context of smooth Gaussian percolation. We also would like to thank Vincent Beffara, Charles-Edouard Bréhier, Damien Gayet and Avelio Sep\'{u}lveda for very interesting discussions. Moreover, we would like to thank Daniel Contreras Salinas, Stephen Muirhead and Alejandro Rivera for useful remarks on a preliminary version of this paper. Finally, we wish to thank an anonymous referee for helpful comments.\\
This research has been partially supported by the 
ANR grant \textsc{Liouville} ANR-15-CE40-0013 and the ERC grant LiKo 676999.

\section{Proof of noise sensitivity}\label{s.proof}

In this section, we prove Theorem \ref{thm:mainbis} (and as a byproduct Theorem \ref{th.main} which is a particular case). We will rely on Schramm and Steif randomized algorithm theorem and on estimates from \cite{MV} (more precisely, we will use Sections 3 and 4 of \cite{MV} but not Section 5, where another randomized algorithm approach is used, based on the OSSS inequality rather than on \cite{SS}). Let $q$ satisfying Conditions \ref{cond:sym}, \ref{cond:FKG} and \ref{cond:pol} for some fixed $\beta>2$, let $f$ be the $\mathcal{C}^2$ random Gaussian field $q \star W$, and let $Q$ be a quad. The proof is divided into the following steps.
\medskip

\paragraph{\bf Step 1.} We first observe that, by linearity and Remark \ref{rem:coupl}, we have the following useful rewriting of $(f(0), f(t))$,
\[
\begin{cases}
f(0) & = q \star W \\
f(t) & = e^{-t }q\star W + \sqrt{1 - e^{-2t}} q\star \widetilde{W}
\end{cases}
\]
where $\widetilde{W}$ is an independent copy of the white noise $W$.
\medskip

\paragraph{\bf Step 2.} In \cite{MV}, the following local approximation of the field $f$ is introduced in order to have spatial independency: for any radius $r\geq 1$,  
\[
f_r := q_r \star W\,,
\]
where $q_r(x) = \chi_r(x) q(x)$ and $\chi_r \, : \, \R^2 \rightarrow [0,1]$ is a smooth approximation of $1_{ |\bullet| > r}$. More precisely, we ask that $\chi_r$ is smooth, isotropic, that for every $k \geq 1$, the $k^{th}$ derivatives of $\chi_r$ are uniformly bounded, and that
\begin{eqnarray*}
\chi_r(x) = 1 \text{ if } |x| \leq r/2-1/4 \, ;
\chi_r(x) = 0 \text{ if } |x| \geq r/2 \, .
\end{eqnarray*}
(Note that either $q_r$ is identically equal to $0$ or $q_r$ satisfies Conditions \ref{cond:sym}, \ref{cond:FKG} and \ref{cond:pol} since $q$ does.) In our setting, we are interested in connection events at large scale $n$ (such as $\{f \in \cross_0(nQ)\}$). It will be convenient at these scales to rely on the approximation
\[
f \approx f_{n^{\gamma}} \text{ if } \gamma > \frac 1 {\beta-1} \, .
\]
This approximation is robust for any monotonic event as can be seen from the following proposition.

\begin{proposition}[Proposition 3.11 and Corollary 3.7 of \cite{MV}]\label{p:trunc}
For every $\gamma>0$, there exists $c=c(q,\beta,\gamma)$ such that, for every $R \geq 2$, every $D$ subset of $\R^2$ with diameter less than $R$, and every monotonic event $A \in \mathcal{F}_D$, we have
\[
\Pb{ \{ f \in A \} \Delta \{ f_{R^{\gamma}} \in A \} } \leq c \log(R) R^{1+\gamma(1-\beta)} \, .
\]
\end{proposition}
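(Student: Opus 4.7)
The strategy is a sprinkling argument exploiting the monotonicity of $A$. Write $f = f_{R^\gamma} + g$ with $g := (q - q_{R^\gamma}) \star W$ the ``tail'' contribution driven by $q$ outside the ball $\{|x| \le R^\gamma/2\}$. The plan is to show that $g$ is uniformly small on $D$ and that monotonic events in $\mathcal{F}_D$ are stable under such a uniform perturbation up to a controllable boundary error.

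Since $q_{R^\gamma}$ agrees with $q$ inside $\{|x| \le R^\gamma/2 - 1/4\}$, Condition \ref{cond:pol} with $\beta > 2$ gives
\[
\Var{g(x)} = \|q - q_{R^\gamma}\|_2^2 \lesssim \int_{|y| \ge R^\gamma/2 - 1/4} |y|^{-2\beta} \, dy \lesssim R^{2\gamma(1-\beta)},
\]
uniformly in $x$, with an analogous estimate for $\Var{\nabla g(x)}$ using the gradient bound in Condition \ref{cond:pol}. A standard chaining / Borell--TIS argument on $D$ (of diameter $\le R$) then yields
\[
\Pb{\sup_{x \in D} |g(x)| > \delta} \le c\, R^{O(1)} \exp\bigl( -c' \delta^2 R^{2\gamma(\beta-1)} \bigr),
\]
so that with $\delta := K \sqrt{\log R} \cdot R^{\gamma(1-\beta)}$ for $K$ large, this probability is negligible compared to the target bound.

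Because $A \in \mathcal{F}_D$ is monotonic (say increasing), on the event $\{\sup_D |g| \le \delta\}$ the sandwich $f_{R^\gamma} - \delta \le f \le f_{R^\gamma} + \delta$ on $D$ immediately gives the inclusion
\[
\{f \in A\} \triangle \{f_{R^\gamma} \in A\} \subseteq \bigl\{ f_{R^\gamma} \in A^{+\delta} \setminus A^{-\delta} \bigr\},
\]
where $A^{\pm\delta} := \{u : u \pm \delta \mathbf{1}_D \in A\}$ and $A^{-\delta} \subseteq A \subseteq A^{+\delta}$ by monotonicity. The remaining task is to bound $\Pb{f_{R^\gamma} \in A^{+\delta} \setminus A^{-\delta}}$ uniformly over all monotonic $A$.

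This last step is the main obstacle, and is where the linear factor $R$ in the claimed bound must arise. Writing
\[
\Pb{f_{R^\gamma} \in A^{+\delta} \setminus A^{-\delta}} = \int_{-\delta}^{\delta} \frac{d}{d\ell} \Pb{f_{R^\gamma} + \ell \in A} \, d\ell,
\]
one aims to bound the derivative uniformly in $\ell$. For monotonic $A$, a Gaussian integration by parts (equivalently, a Cameron--Martin shift implementing the constant shift of the mean by $1$ on $D$) expresses this derivative as a ``flux'' across the interface of $A$; the delicate point is to exploit the monotonicity of $A$ (rather than only its measurability) to avoid a spurious $R^2$ factor coming from the area of $D$ and obtain a linear $O(R)$ bound via a Kac--Rice estimate on the expected length of level crossings of $f_{R^\gamma}$ that can trigger a change of $\mathbf{1}_A$. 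Combined with the choice $\delta \asymp \sqrt{\log R}\, R^{\gamma(1-\beta)}$, this yields the claimed bound $c \log R \cdot R^{1+\gamma(1-\beta)}$.
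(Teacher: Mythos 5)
Your Steps 1--4 are fine and match the standard route: the Gaussian tail estimate $\Var{(f-f_{R^\gamma})(x)}\lesssim R^{2\gamma(1-\beta)}$, a chaining/BTIS bound for $\sup_D|f-f_{R^\gamma}|$, and the monotone sandwich reducing everything to $\P\bigl[f_{R^\gamma}\in A^{+\delta}\setminus A^{-\delta}\bigr]$ with $\delta\asymp\sqrt{\log R}\,R^{\gamma(1-\beta)}$. The problem is that the one step carrying the whole quantitative content --- the bound $\P\bigl[f_{R^\gamma}\in A^{+\delta}\setminus A^{-\delta}\bigr]\leq O(\delta R)$ \emph{uniformly over all monotone} $A\in\mathcal{F}_D$ --- is exactly the step you leave as ``the main obstacle'', and the route you sketch for it is not viable as stated. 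A Kac--Rice bound on the length of the zero set cannot by itself give the linear factor: for a general monotone event (take $A=\{u:\,\mathrm{area}(\{u>0\}\cap D)\geq |D|/2\}$) the set of points whose sign change can flip $\1_A$ is essentially the whole nodal set in $D$, whose expected length is $\Theta(R^2)$; the true $O(\delta R)$ bound in such examples comes from anticoncentration of a global functional, not from a length count, and monotonicity alone does not convert a ``flux across the interface'' into an $O(R)$ estimate. So as written there is a genuine gap: the derivative bound $\frac{d}{d\ell}\P[f_{R^\gamma}+\ell\in A]=O(R)$ uniform in $A$ and $\ell$ is asserted, not proved.

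The mechanism that actually yields the factor $R$ (and is the content of the results of \cite{MV} that the paper simply cites here --- the paper gives no proof of its own) is a Cameron--Martin/total-variation sprinkling argument, which you mention only parenthetically and then abandon. One chooses $h\in L^2(\R^2)$, supported in an $O(1)$-neighbourhood $D'$ of $D$, with $q_{R^\gamma}\star h\geq\delta$ on $D$; for instance $h=(\delta/c_0)\,q_{R^\gamma}\star\1_{D'}$ works because $q\star q\geq 0$ and $q\star q(0)=\|q\|_2^2>0$ give $(q_{R^\gamma}\star q_{R^\gamma}\star\1_{D'})(x)\geq c_0>0$ for $x\in D$ (this is where Condition \ref{cond:FKG}, or $q\geq0$ in \cite{MV}, enters; positivity is needed to build the shift). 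Then $\|h\|_{L^2}\leq C\delta\,|D'|^{1/2}\leq C\delta R$, and since $A$ is increasing, $\P[f_{R^\gamma}+\delta\in A]\leq\P[f_{R^\gamma}+q_{R^\gamma}\star h\in A]\leq\P[f_{R^\gamma}\in A]+d_{TV}\bigl(W,W+h\bigr)\leq\P[f_{R^\gamma}\in A]+C\delta R$, and symmetrically for the downward shift. This is where monotonicity is really used (to dominate the constant shift $\delta\1_D$, which is not in the Cameron--Martin space, by an admissible shift), and it gives $\P\bigl[\{f\in A\}\,\Delta\,\{f_{R^\gamma}\in A\}\bigr]\leq C\sqrt{\log R}\,R^{1+\gamma(1-\beta)}+R^{-K}$, which implies the stated bound. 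Replacing your last paragraph by this argument (or by a precise citation of it) would close the gap; without it, the proposal does not prove the proposition.
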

%This implies in particular that there exists $c_3>0$ such that, for any $C>0$ and for any $n$ large enough
%\[
%\Pb{\| f - f_{\log n}\|_{\infty, B_{C\cdot n}} \geq e^{-c_3 \log^2(n)} } \leq c_1 e^{-c_2 C n }
%\]
%Then, by using Corollary 3.7 of \cite{MV}, this shows that
%\[
%\Pb{\{f\in \mathrm{Cross}(nQ) \} \Delta \{f_{\log n}\in \mathrm{Cross}(nQ) \}} \leq O(1) e^{-\Omega(1) \log^2(n)}
%\]
Let us fix some exponent $\gamma \in ]0,1[$ such that $1+\gamma(1-\beta)<0$, which is possible since $\beta>2$. Thanks to Proposition \ref{p:trunc}, it is enough for us to prove the following estimate for some $\alpha>0$ and $c>0$
\begin{align*}\label{}
\Cov{1_{f_{n^\gamma}(0) \in \cross_0(nQ)}, 1_{f_{n^\gamma}(t_n)\in \cross_0(nQ)}} \leq  cn^{-\alpha}
\end{align*}
as soon as $t_n \geq n^{-\alpha}$.
\medskip

\paragraph{\bf Step 3.} We now proceed to a further approximation step, where one approximates the Gaussian white noise $W(dx)$ using independent Bernoulli variables. This second approximation step is reminiscent to the definition of $f_r^\varepsilon$ in \cite{MV} except that we rely on Bernoulli variables here instead of Gaussian variables. As such we are as close as we may from the setup in \cite{SS}. 

Let $N \geq 1$ be an integer and let us consider the following discrete white noise on $\R^2$
\begin{align*}\label{}
W^N(x) := N \sum_{v\in \frac 1 N \Z^2} \sigma_v 1_{x \in v+[-1/N,1/N]^2} \,,
\end{align*}
where the random variables $\sigma_v$ are independent and $\Pro \left[ \sigma_v = 1\right] = \Pro \left[ \sigma_v = -1 \right]=1/2$. We thus define 
\[
f_{n^\gamma}^N:= q_{n^\gamma} \star W^N \, .
\]
Notice that the indicator function $1_{f_{n^\gamma}^N \in \cross(nQ)}$ is nothing but a \textbf{Boolean function} defined on a hypercube $\Omega_{n,N}=\{-1,1\}^{\Theta(n^2 N^2)}$ (N.B. $\Theta(n^2N^2)$ comes from the fact that there are of order $n^2\,N^2$ Bernoulli bits in the $n^\gamma$-neighbourhood of the rescaled quad $nQ$).

For any open square $S \subset \R^2$ and any $s\in \R$, let $H^s(S)$ denote the Sobolev space on $S$ of order $s$. Another important remark at this stage is that if we let each Bernoulli variable $\sigma_v$ evolve according to a rate 1 Poisson Point process (i.e. $t \mapsto \sigma_v(t)$ switches its state independently at rate 1 for each $v\in \frac 1 N \Z^2$) then it induces a dynamics $t \mapsto W^N_t$ which is such that the following holds: For every $t \geq 0$, every $\varepsilon>0$ and every (open) square $S \subset \R^2$, we have the following convergence in law in the space $H^{-1-\varepsilon}(S) \times H^{-1-\varepsilon}(S)$:
\begin{align}\label{e:law_sobo}
(W^N_0, W^N_t) \overset{\text{law}}\longrightarrow (W_0,W_t)\,,
\end{align}
where $W_t = e^{-t}W+ \sqrt{1- e^{-2t}} \widetilde{W}$, $\widetilde{W}$ is a white noise independent of $W$. See Appendix \ref{s:sobo} for a proof of this (classical) fact.

For every $t \geq 0$, let
\[
f_{n^\gamma}(t)=q_{n^\gamma} \star W_t \text{ and } f_{n^\gamma}^N(t)=q_{n^\gamma} \star W_t^N.
\]
Let us end this step by showing the following consequence of \eqref{e:law_sobo}.
\begin{lemma}\label{l:coupl_cv}
For every $t \in \R_+$ and every $n$, we can couple $(W^N_0,W^N_t)_{N\in \N}$ and $(W_0,W_t)$ such that a.s. the following holds
\[
\|f_{n^\gamma}(0) - f_{n^\gamma}^N(0) \|_{\infty, nQ} \underset{N\to \infty}\longrightarrow 0 \text{ and } \|f_{n^\gamma}(t) - f_{n^\gamma}^N(t) \|_{\infty, nQ} \underset{N\to \infty}\longrightarrow 0 \, .
\]
\end{lemma}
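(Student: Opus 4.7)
The plan is to upgrade the convergence in law \eqref{e:law_sobo} to almost sure convergence via Skorokhod's representation theorem and then transfer it into uniform convergence on $nQ$ by exploiting the smoothness and compact support of $q_{n^\gamma}$. First, fix $\varepsilon \in (0,1)$ and choose a bounded open square $S \subset \R^2$ large enough that $x - \mathrm{supp}(q_{n^\gamma}) \subset S$ for every $x \in nQ$; this is possible because $q_{n^\gamma}$ is supported in the ball of radius $n^\gamma/2$ about the origin and $nQ$ is bounded. Since $H^{-1-\varepsilon}(S) \times H^{-1-\varepsilon}(S)$ is a separable Hilbert space (hence Polish), Skorokhod's representation theorem applied to \eqref{e:law_sobo} furnishes a probability space on which one realises the whole sequence $\bigl(W_0^N, W_t^N\bigr)_{N \geq 1}$ together with the limit $(W_0, W_t)$, with the prescribed joint law at each $N$, such that almost surely
\[
\|W_0^N - W_0\|_{H^{-1-\varepsilon}(S)} + \|W_t^N - W_t\|_{H^{-1-\varepsilon}(S)} \xrightarrow[N \to \infty]{} 0.
\]

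Next, for each $x \in nQ$ set $\varphi_x(y) := q_{n^\gamma}(x - y)$. By Condition \ref{cond:sym} and the smoothness of $\chi_{n^\gamma}$, the function $q_{n^\gamma}$ is of class $\mathcal{C}^3$ with compact support, so $\varphi_x \in \mathcal{C}^3_c(S) \subset H^{1+\varepsilon}_0(S)$. Because the support of $\varphi_x$ stays inside $S$ as $x$ varies over $nQ$, the quantity $\|\varphi_x\|_{H^{1+\varepsilon}(S)}$ equals the translation-invariant $H^{1+\varepsilon}(\R^2)$-norm of $q_{n^\gamma}$, yielding a constant $M = M(n) < \infty$ with
\[
\sup_{x \in nQ} \|\varphi_x\|_{H^{1+\varepsilon}(S)} \leq M.
\]
Since $H^{-1-\varepsilon}(S)$ is the dual of $H^{1+\varepsilon}_0(S)$ and since, on smooth compactly supported test functions, the duality bracket $\langle u, \varphi_x\rangle$ coincides both with the Wiener integral $\int \varphi_x\, dW$ (for $u = W$ the white noise on $S$) and with the Lebesgue integral $\int \varphi_x W^N$ (for $u = W^N$, which is a bona fide bounded function), we can identify
\[
f_{n^\gamma}(0)(x) - f_{n^\gamma}^N(0)(x) \;=\; \bigl\langle W_0 - W_0^N, \, \varphi_x \bigr\rangle_{H^{-1-\varepsilon}(S),\, H^{1+\varepsilon}_0(S)},
\]
and similarly at time $t$.

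Cauchy--Schwarz in the duality pairing then yields
\[
\sup_{x \in nQ} \bigl|f_{n^\gamma}(0)(x) - f_{n^\gamma}^N(0)(x)\bigr| \;\leq\; M \cdot \|W_0 - W_0^N\|_{H^{-1-\varepsilon}(S)} \xrightarrow[N \to \infty]{\text{a.s.}} 0,
\]
and the identical estimate at time $t$ completes the proof. The only technical point requiring care is the identification of the pointwise convolution with the abstract duality pairing; this is harmless here because $\varphi_x$ is a genuine smooth compactly supported test function on which both $W$ and $W^N$ act unambiguously. The key conceptual ingredient, and the step one might worry most about, is the uniformity in $x \in nQ$: this reduces to a single functional estimate thanks to the uniform $H^{1+\varepsilon}$-boundedness of the family $\{\varphi_x : x \in nQ\}$, itself a direct consequence of translation invariance of the Sobolev norm on $\R^2$.
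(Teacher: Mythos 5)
Your proof is correct and follows essentially the same route as the paper: Skorokhod's representation theorem applied to \eqref{e:law_sobo} to get a.s.\ Sobolev convergence on a square $S$ containing the $n^\gamma$-neighbourhood of $nQ$, then testing $W^N-W$ against the compactly supported kernels $h^x(y)=q_{n^\gamma}(x-y)$ with a bound on their Sobolev norm that is uniform in $x\in nQ$. The only (immaterial) difference is that you use the $H^{-1-\varepsilon}(S)$/$H^{1+\varepsilon}_0(S)$ duality directly, whereas the paper passes to the weaker $H^{-2}(S)$ norm and pairs with $\|h\|_{H^2(S)}$.
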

Note that Lemma \ref{l:coupl_cv} implies that
\begin{multline}\label{e.cv_of_cov}
\Cov{1_{f_{n^\gamma}^N(0) \in \cross_0(nQ)}, 1_{f_{n^\gamma}^N(t)\in \cross_0(nQ)}}\\
 \underset{N\to \infty}\longrightarrow \Cov{1_{f_{n^\gamma}(0) \in \cross_0(nQ)}, 1_{f_{n^\gamma}(t)\in \cross_0(nQ)}} \, .
\end{multline}
Actually, to deduce this result from Lemma \ref{l:coupl_cv}, one needs a regularity result about $f_{n^\gamma}$. We refer to Appendix \ref{s.reg} for more details.
\begin{proof}[Proof of Lemma \ref{l:coupl_cv}]
One difference here with \cite{MV} is that the discrete white noise field $W^N$ is less naturally coupled to $W$. Fix some $n$ and consider some (open) square $S$ that contains the $n^\gamma$-neighbourhood of $nQ$. Let $t \in \R_+$. By \eqref{e:law_sobo} and by relying on Skorokhod's representation theorem, one may couple $(W^N_0,W^N_t)_{N\in \N}$ and $(W_0,W_t)$ on the same probability space so that
\[
\| W^N_0 - W_0 \|_{H^{-1-\varepsilon}(S)} \to 0 \, ; \: \| W^N_t - W_t \|_{H^{-1-\varepsilon}(S)} \to 0 \, .
\]
Now, using the fact that for any $\calC^2$ function $h$ with compact support included in $S$ we have 
\begin{align}\label{eq.duality_formula}
|\<{W^N-W,h}| \leq \|W^N - W\|_{H^{-2}(S)} \|h\|_{H^2(S)} \, ,
\end{align}
we readily conclude (by considering, for any $x \in nQ$, the function $h \, : \, y \in S \mapsto h(y)=h^x(y):=q_{n^\gamma}(x-y)$).
\end{proof}
\medskip

\paragraph{\bf Step 4.} Let $g_{n,N} \, : \, \Omega_{n,N} \rightarrow \{0,1\}$ be the indicator function of the crossing event of the quad $n Q$ for $f_{n^\gamma}^N$. Schramm and Steif (Theorem \ref{thm:SchSt}) show that:
\begin{eqnarray*}
\Cov{1_{f_{n^\gamma}^N(0) \in \cross_0(nQ)}, 1_{f_{n^\gamma}^N(t)\in \cross_0(nQ)}} & = & \Cov{g_{n,N}(\sigma(0)), g_{n,N}(\sigma(t))}\\
& = & \sum_{S \neq \emptyset} \hat g_{n,N}(S)^2 e^{-t|S|}\\
& \leq & \sum_{k \geq 1} k e^{-tk} \delta( g_{n,N} ) \, ,
\end{eqnarray*}
where $\delta( g_{n,N} )$ is the revealment of $g_{n,N}$. Note that, by \eqref{e.cv_of_cov}, it is now sufficient to show that there exists some $\delta_n$ that decays at least polynomially fast and such that, for every $n$, $\delta(g_{n,N})$ converges to $\delta_n$ as $N\rightarrow +\infty$. Let us prove this. Let us first define a (randomized) algorithm that determines $g_{n,N}$. In this definition, ``discovering'' a region of the plane means that we reveal all the bits $\sigma_v$ at distance less than $n^{\gamma}/2$ from this region (remember that we have fixed some $\gamma \in ]0,1[$ such that $1+\gamma(1-\beta)<0$). Since $q_{n^\gamma}(x)=0$ for every $x \in \R^2$ such that $|x| \geq n^\gamma/2$, this gives us the value of $f^N_{n^\gamma}(x)$ for every $x$ in this region. Let us first define the algorithm in the case where $Q=[0,1]^2$ and where the left and right sides are the two distinguished segments:

Choose uniformly at random some $k \in \{1,\cdots,\lfloor n^{1-\gamma} \rfloor \}$ and discover the segment $\{ kn^\gamma \} \times \R \cap nQ$. Then, discover all the $1 \times 1$ squares of the grid $\Z^2$ (for instance) that contain a point that is connected to $\{ kn^\gamma \} \times \R$ by a path included in the intersection of the region already explored and $\{ f^N_{n^\gamma} > 0 \} \cap nQ$. Stop the algorithm when all the connected components of $\{ f^N_{n^\gamma} > 0 \} \cap nQ$ that intersect $\{ kn^\gamma \} \times \R$ have been discovered. Note that this algorithm determines the crossing event.

Let $\arm_0(r,R)$ denote the event that there is a positive continuous path included in $[-R,R]^2 \setminus ]-r,r[^2$ that crosses this annulus (if $r>R$, the convention is that $\arm_0(r,R)$ is the sure event). There exists $c>0$ such that the revealment of the above algorithm is less than or equal to
\[
c n^{\gamma-1} \sum_{k=1}^{\lfloor n^{1-\gamma} \rfloor} \Pro \left[ f_{n^\gamma}^N \in \arm_0(cn^\gamma,(k-1)n^\gamma/c) \right] \, .
\]
By Lemma \ref{l:coupl_cv} (and, once again, Appendix \ref{s.reg}), the above converges to
\[
c n^{\gamma-1} \sum_{k=1}^{\lfloor n^{1-\gamma} \rfloor} \Pro \left[ f_{n^\gamma} \in \arm_0(cn^\gamma,(k-1)n^\gamma/c) \right] \, .
\]
Remember that, in order to prove Theorem \ref{thm:mainbis}, it is sufficient to show that the above decays at least polynomially fast in $n$. By Proposition \ref{p:trunc}, it is enough to show that $\Pro \left[ f \in \arm_0(r,R) \right]$ decays at least polynomially fast in $(r/R)$. This is given by Theorem 4.7 of \cite{MV}.

Let us end this section by explaining how to generalize this to any quad $Q$. Let $\eta,\eta'$ be the two distinguished segments of $Q$. First note that there exist $h=h(Q)>0$ and $m \geq 2$ such that there exist $m$ unit squares of the grid $h\Z^2$: $S_1,\cdots,S_m$ that satisfy i) $S_1$ intersects $\eta$ but not $\partial Q \setminus \eta$, ii) $S_m$ intersects $\eta'$ but not $\partial Q \setminus \eta'$, iii) all the $S_i$'s are distinct and $S_{i+1}$ shares a side with $S_i$ for every $i \in \{1,\cdots,m-1 \}$, iv) $\cup_{i=2}^{m-1} S_i \subseteq Q$. We then run exactly the same algorithm as in the case $Q=[0,1]^2$ but by replacing the line $\{ k \} \times \R$ by the $\Theta(n^{1-\gamma})$ lines included in $\cup_{i=1}^m (nS_i)$ depicted in Figure \ref{fig:algo}. Now, the arguments are exactly the same as in the case $Q=[0,1]^2$. This ends the proof of Theorem \ref{thm:mainbis}.

\begin{figure}[h!]
\centering
\includegraphics[scale=0.5]{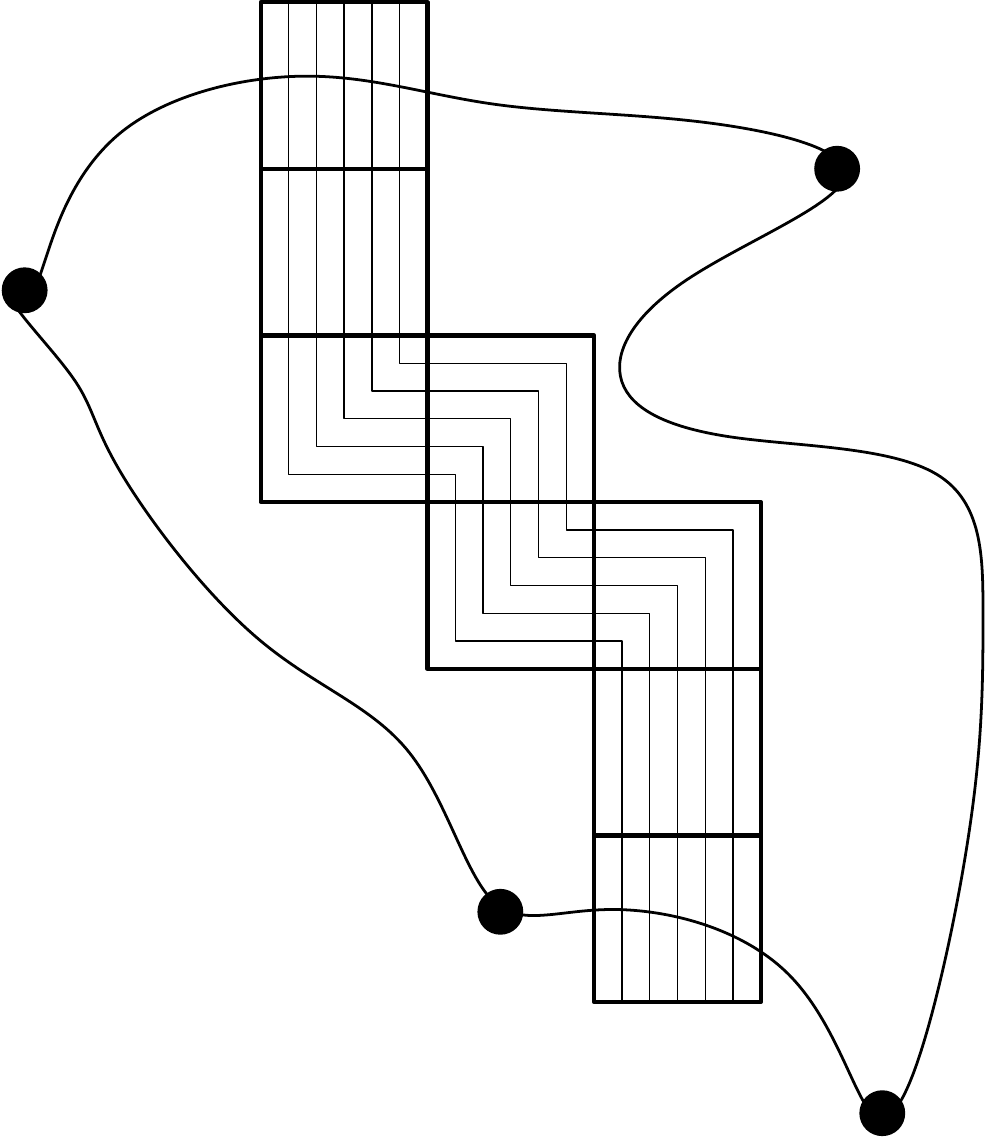}
\caption{The quad $nQ$, the squares $nS_1,\cdots,nS_m$, and the $\Theta(n^{1-\gamma})$ lines that replace the $\R \times \{k \}$'s.}\label{fig:algo}
\end{figure}

\section{Proof of Proposition \ref{pr.cor}.}\label{s.cor}

In this section, we prove Proposition \ref{pr.cor}, which is specific to the Bargmann-Fock field. However, it will be a consequence of the following more general result (and of Theorem \ref{th.main}).

\begin{proposition}\label{p.Wiener}
Let $q$ be a function that satisfies Condition \ref{cond:weak} and remember that $f(t)=q \star W_t$. For every event $A \in \mathcal{F}$, we have
\[
 \Var{\Pb{f(t) \in A | f(0) }} = \Var{\Pb{f(t) \in A | W_0 } }  =\Cov{ 1_{f(0) \in A} , 1_{f(2t) \in A} }  \, .
\]
\end{proposition}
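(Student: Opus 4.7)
The plan is to exploit the explicit coupling from Remark \ref{rem:coupl},
\[
W_t = e^{-t} W_0 + \sqrt{1 - e^{-2t}}\,\widetilde{W}, \qquad f(t) = e^{-t} f(0) + \sqrt{1 - e^{-2t}}\,\widetilde{f},
\]
where $\widetilde{W}$ is an independent copy of $W_0$ and $\widetilde{f} := q \star \widetilde{W}$ is therefore independent of $W_0$ (in particular of $f(0)$). Both equalities will follow from manipulations of this identity, together with stationarity and reversibility of the Ornstein--Uhlenbeck dynamics.

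For the first equality, the key observation is that $\Pb{f(t) \in A \mid W_0}$ is already $\sigma(f(0))$-measurable. Indeed, conditioning on $W_0$ freezes $f(0)$ while leaving $\widetilde{f}$ independent, so
\[
\Pb{f(t) \in A \mid W_0} = \Phi(f(0)), \qquad \Phi(g) := \Pb{e^{-t} g + \sqrt{1-e^{-2t}}\,\widetilde{f} \in A}.
\]
Since $f(0)$ is $\sigma(W_0)$-measurable, the tower property then immediately gives $\Phi(f(0)) = \Pb{f(t) \in A \mid f(0)}$, which is the first identity.

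For the second equality, I would use that the Ornstein--Uhlenbeck white-noise dynamics is stationary, Markov, and reversible. Concretely, one writes
\[
\E\bigl[1_{f(0) \in A}\,1_{f(2t) \in A}\bigr] = \E\bigl[\E[1_{f(0) \in A} \mid W_t]\,\E[1_{f(2t) \in A} \mid W_t]\bigr]
\]
by the Markov property at time $t$, and then observes that conditionally on $W_t$ both $W_0$ and $W_{2t}$ have the same law, namely $e^{-t} W_t + \sqrt{1-e^{-2t}}\,U$ for an independent auxiliary white noise $U$ (this is the reversibility, and follows directly from the Gaussian representation above applied symmetrically to the past and future of $W_t$). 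Thus both inner conditional probabilities equal the \emph{same} function $h(W_t)$, where $h(w) = \Pb{q \star (e^{-t} w + \sqrt{1-e^{-2t}}\,U) \in A}$. Comparing with step one we recognize $h(W_0) = \Pb{f(t) \in A \mid W_0}$, and stationarity gives $h(W_t) \stackrel{\mathrm{law}}{=} h(W_0)$. Subtracting $\E[1_{f(0)\in A}]^2 = \E[h(W_0)]^2$ (again by stationarity) from both sides yields the announced identity.

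The argument is essentially a one-line semigroup identity $\langle G, P_{2t} G\rangle = \|P_t G\|_2^2$ applied to $G(w) := 1_{q\star w \in A}$, so the only point requiring a little care is the rigorous justification that the OU process on white noise is Markov and reversible in the infinite-dimensional setting. This is not really an obstacle, since both properties are read off directly from the joint Gaussian law of $(W_0, W_t, W_{2t})$ obtained by iterating the identity $W_t = e^{-t}W_0 + \sqrt{1-e^{-2t}}\,\widetilde W$; once this structural fact is in place, the proof is entirely formal.
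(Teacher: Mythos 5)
Your proof is correct, but it takes a genuinely different route from the paper's. The paper proves the second equality via a \emph{Wiener chaos expansion}: it writes $1_{f\in A} = \sum_{k\ge 0} \int h_k\,dW^{\otimes k}$ and shows that both $\Var{\Pb{f(t)\in A\mid W_0}}$ and $\Cov{1_{f(0)\in A},1_{f(2t)\in A}}$ equal $\sum_{k\ge 1} e^{-2tk}\iint h_k^2$, the continuum analogue of the Fourier--Walsh computation for Boolean functions. You instead use the Markov property and reversibility of the white-noise Ornstein--Uhlenbeck dynamics to reduce the identity to the abstract semigroup relation $\langle G, P_{2t}G\rangle = \|P_t G\|_{L^2}^2$, applied to $G(w) = 1_{q\star w \in A}$, relying only on the conditional-independence and time-symmetry of the jointly Gaussian triple $(W_0, W_t, W_{2t})$; no chaos decomposition appears. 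Both arguments are sound. It is worth noting, however, that the paper explicitly remarks ``it seems one cannot avoid a spectral proof here'' --- your argument shows that the semigroup/reversibility route works and is, if anything, more elementary. What the authors really meant is that the \emph{approximation} strategy (discretize, prove the Boolean-function identity, pass to the limit) fails because conditional expectations are not continuous in the conditioning variable; your direct argument sidesteps that obstacle in a different way than they chose. One minor suggestion: you may want to spell out the verification that $W_0$ and $W_{2t}$ are conditionally independent given $W_t$, which for a jointly Gaussian triple amounts to the operator identity $\Cov{W_0,W_{2t}} = \Cov{W_0,W_t}\Cov{W_t,W_t}^{-1}\Cov{W_t,W_{2t}}$, i.e.\ $e^{-2t}I = e^{-t}\cdot I\cdot e^{-t}I$; this is the rigorous content of ``iterating the identity $W_t = e^{-t}W_0 + \sqrt{1-e^{-2t}}\,\widetilde W$''.
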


\begin{proof}[Proof of Proposition \ref{pr.cor} using Proposition \ref{p.Wiener}]
Consider a 3D Bargmann-Fock field $F$, let $P(t)=\R^2 \times \{t\}$ and let $f^{\mathrm{hor} }(t,\cdot)=F_{|P(t)}$. It is easy to check that for every $t \in \R$ the joint coupling $(f^{\mathrm{hor}}(0),f^{\mathrm{hor} }(t))$ can be realized as follows:
\begin{align*}\label{}
f^\mathrm{hor}(t) & = e^{-\frac {t^2} 2} f^\mathrm{hor}(0) + \sqrt{1 - e^{-t^2}}\, \widetilde{f}^\mathrm{hor} \, ,
\end{align*}
where $\widetilde{f}^\mathrm{hor}$ is an independent copy of $f^\mathrm{hor}(0)$. Together with Proposition \ref{p.Wiener} (and Remark \ref{rem:coupl}), this implies that
\[
\Var{\Pb{f^\mathrm{hor}(t) \in A | f^\mathrm{hor}(0) }} = \Cov{ 1_{f(0) \in A} , 1_{f(t^2) \in A} }  \, .
\]
In particular, Proposition \ref{pr.cor} is now a direct consequence of Theorem~\ref{th.main}.
\end{proof}

The rest of this section is devoted to the proof of Proposition \ref{p.Wiener}. As we shall explain below, it seems one cannot avoid a spectral proof here. Let us start by recalling the simpler case of Boolean functions. We use the notations from Subsection \ref{ss:Sch-St}. If $g : \{-1,1\}^n \to \{0,1\}$ is a Boolean function, then we have the following useful identity: $\Eb{g(\sigma(t)) \md \sigma(0)} = \sum \hat g(S) e^{-t|S|} \chi_S(\sigma(0))$, which leads to 
\[
\Var{\Eb{g(\sigma(t)) \md \sigma(0)}} = \sum_{S\neq 0} \hat g(S)^2 e^{-2t|S|} \, .
\]
If we combine the above with the facts recalled in Subsection \ref{ss:Sch-St}, we obtain that
\[
\Var{\Eb{g(\sigma(t)) \md \sigma(0)}} = \Cov{ g(\sigma(2t)),g(\sigma(0)) } \, ,
\]
which is the discrete analogue of Proposition \ref{p.Wiener}. These spectral identities show that proving noise sensitivity in terms of covariance implies the seemingly stronger fact that the whole knowledge of the initial condition $\sigma(0)$ almost says nothing on the event $\{g(\sigma(t))=1\}$. At this stage, as the proof of Theorem \ref{th.main} proceeds by approximation,  
\[
1_{f \in A} \approx g_{N}(\sigma)
\]
where $g_{N}$ is a Boolean function on $\Omega_{N}=\{-1,1\}^{\Theta(N^2)}$, it seems natural to conclude by approximation, using  that we probably have
\[
\Var{\Eb{g_{N}(\sigma(t)) \md \sigma(0)}} \to \Var{\Pb{f(t) \in A | f(0)}} \, .
\] 
But conditional expectations are in general not continuous functions of the conditioning. Because of this, we argue differently as below. Before writing the proof, let us explain why Theorem \ref{th.main} and Proposition \ref{pr.cor} seem counter-intuitive even from the Fourier point of view.
\begin{remark}\label{rem:counter_int_fourier}
Remember that $F$ is a 3D Bargann-Fock and let $q(x)=q(x_1,x_2,x_3)=(2/\pi)^{1/4}e^{-|x|^2}$. As in Section \ref{s.proof}, we can approximate $F$ by a field $F^N_r=q_r \star W^N$ where $W^N$ is now a 3D discrete white noise and $q_r$ is a truncation of $q$ (for some suitable $r \leq n$). Let $P(t) = \R^2 \times \{t\}$, let $Q$ be a quad, and let $g=g_{r,n,N} \, : \, \{-1,1\}^{\Theta(n^3N^3)} \rightarrow \{0,1\}$ be the Boolean function such that $g_{r,n,N}(\sigma) = 1_{(F^N_r)_{|P(0)} \in \cross_0(nQ)}$. Then we can show that
\[
\Cov{ 1_{ (F^N_r)_{|P(0)} \in \cross_0(nQ)}, 1_{ (F^N_r)_{|P(t)} \in \cross_0(nQ)}  } = \sum_{S \neq \emptyset} \widehat{g}(S)\widehat{g}(S_{t})
\]
where $S_t$ is $S$ translated by $(0,0,-t)$ and where the sum is on subsets of a 3D grid of size $\Theta(n^3N^3)$. It does not seem obvious at all at first sight that the above is small when $n$ is large. Indeed, it does not seem clear why the above does not behave like the following analogous 2D quantity: consider a planar Bargmann-Fock field, some $x_0 \in \R^2$, and let $h=h_{r,n,N} \, : \, \{-1,1\}^{\Theta(n^2N^2)} \rightarrow \{0,1\}$ be the Boolean function corresponding to the crossing of $nQ$ by the planar analogue $f^N_r$ of $F^N_r$. Then,
\[
\Cov{ 1_{f^N_r \in \cross_0(nQ)} , 1_{f^N_r(\cdot +x_0) \in \cross_0(nQ)} } = \sum_{S \neq \emptyset} \widehat{h}(S)\widehat{h}(S_{x_0})
\]
where $S_{x_0}$ is $S$ translated by $x_0$ and where the sum is on subsets of a 2D grid of size $\Theta(n^2N^2)$. The above does not go to zero; $1_{f^N_r \in \cross_0(nQ)}$ and $1_{f^N_r(\cdot +x_0) \in \cross_0(nQ)}$ are highly correlated!
\end{remark}

\begin{proof}[Proof of Proposition \ref{p.Wiener}]
First note that (for instance by Remark \ref{rem:coupl}) the distribution of $f(t)$ conditioned on $W_0$ is the same as the distribution of $f(t)$ conditioned on $f(0)$. This implies that $\Var{\Pb{f(t) \in A | f(0) }} = \Var{\Pb{f(t) \in A | W_0 } }$. Let us now prove that $\Var{\Pb{f(t) \in A | W_0 } }  =\Cov{ 1_{f(0) \in A} , 1_{f(2t) \in A} }$. In our present continuous setting, we may apply the exact same idea as in the discrete case by relying on the appropriate spectral identities. Here we shall use the fact that our events can be seen as functionals  in $L^2(\sigma(W(dx)))$ and use \textbf{Wiener Chaos expansion} in $L^2(\sigma(W(dx)))$ which is the good analogue of the discrete Fourier expansion.
%\footnote{\red{If one prefers to stick to countably many Gaussian variables, one may work equivalently with Wiener Chaos expansions in $L^2(\sigma(\{a_{i,j}\}_{i,j \in \N}))$ where $\{a_{i,j}\}$ are used in~\eqref{e.BF}). H: Il faut enlever cette phrase ou remplacer les $a_{i,j}$ par des $a_n$ qui permettent de définir le bruit blanc. En effet, la Prop 3.1 n'est pas restreinte à Bargmann-Fock.}}.
We may thus write for every event $A \in \mathcal{F}$ (recall $f:=q \star W$):
\begin{align*}\label{}
1_{f \in A} & = \sum_{k=0}^\infty   \iint_{(\R^2)^k} h_{k}(z_1,\ldots,z_k) dW(z_1)\ldots dW(z_k)
\end{align*}
where each $h_{k}\in L^2((\R^2)^k)$ and 
\[
\Pb{f \in A} = \sum_{k=0}^\infty   \iint_{(\R^2)^k} h_{k}^2(z_1,\ldots,z_k) d z_1 \ldots d z_k \, .
\]
For background on such Wiener Chaos expansions and multiple integrals with respect to Gaussian white noise, we refer to \cite{Janson,Peccati}, see also in the 2D setting the useful Section 2 in \cite{CSZ}. Now the following two identities will conclude the proof of Proposition \ref{p.Wiener}: 

\bnum
\item 
\[
\Cov{1_{f(0) \in A }, 1_{f(t)\in A}}
 = \sum_{k=1}^\infty e^{-t k} \iint_{(\R^2)^k} h_{k}^2(z_1,\ldots,z_k) d z_1 \ldots d z_k \, ;
\]
\item 
\[
\Var{\Pb{f(t)\in A | W_0}} = \sum_{k=1}^\infty e^{-2 t k} \iint_{(\R^2)^k} h_{k}^2(z_1,\ldots,z_k) d z_1 \ldots d z_k \, .
\]
\enum 

Even though these identities are classical, let us say a few words on their proofs.
(a) First, let us write $H_{k}(t)=\iint_{(\R^2)^k} h_{k}(z_1,\ldots,z_k) dW_t(z_1)\ldots dW_t(z_k)$ and let us note that the identities
\[
\Cov{H_{k}(0),H_k(t)}=e^{-tk}\iint_{(\R^2)^k} h_{k}^2(z_1,\ldots,z_k) d z_1 \ldots d z_k \, ;
\]
\[
\Var{\E \left[ H_k(t) \md W(0) \right]}=e^{-2t k}\iint_{(\R^2)^k} h_{n,k}^2(z_1,\ldots,z_k) d z_1 \ldots d z_k
\]
are direct consequences of the definition of multiple integrals with respect to the white noise in the case of special simple functions (and of density arguments), see for instance Subsection 2 of \cite{CSZ}.
(b) Second, let us note that a direct use of Fubini's theorem would not be sufficient here to exchange sum and expectation. One way to proceed is as follows. If $X,Y$ are any two variables in $L^2(\sigma(W(dx)))$, their cut-off Wiener chaos expansions (below order $n$, say) $X_n,Y_n$ are such that $X_n \overset{L^2}\longrightarrow X$ and $Y_n \overset{L^2}\longrightarrow Y$. Now, simply by linearity of the expectation, we have that $\Eb{X_n Y_n}$ is the finite sum over $k\in \{0,\ldots, n\}$ of the corresponding $k$-fold integrals. Using the fact that $X_nY_n \to XY$ in $L^1$ and therefore that $\Eb{X_nY_n} \to \Eb{XY}$, this justifies why one can interchange sum and expectation. 
\end{proof}

\section{Noise sensitivity implies sharp threshold}\label{s:PT}

In this section, we explain how one can combine our noise sensitivity result together with the analysis in Section \ref{s.cor} to prove results about the phase transition. In particular, we prove Theorem \ref{thm:pol}. We begin with the following result.
\begin{proposition}\label{p:dyna}
Let $q$ be a function that satisfies Condition \ref{cond:weak}. Also, let $(A_n)_{n \in \N}$ be a sequence of events in $\mathcal{F}$ and let $(t_n)_{n \in \N}$ be a sequence of positive numbers. If
\[
\Cov{ 1_{f(0) \in A_n} ,1_{f(t_n) \in A_n} } \underset{n \rightarrow +\infty}{\longrightarrow} 0
\]
and if $\Pro \left[ f \in A_n \right]$ is bounded away from $0$ and $1$, then for every $(s_n)_{n \in \N}$ sequence of positive numbers that satisfies $t_n = o(s_n)$, the number of times that $1_{f(t) \in A}$ switches from $0$ to $1$ between times $0$ and $s_n$ goes to $+\infty$ in probability as $n \rightarrow +\infty$.
\end{proposition}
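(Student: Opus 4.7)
Fix $N_n := \lfloor s_n/t_n \rfloor$, which tends to $\infty$ by assumption, and sample the trajectory at the grid $0,t_n,2t_n,\ldots,N_n t_n$. Writing $X_k := 1_{f(kt_n)\in A_n}$ and $Y_k := (1-X_k)X_{k+1}$, every index $k \in \{0,\ldots,N_n-1\}$ with $Y_k=1$ witnesses at least one $0 \to 1$ switch on $[kt_n,(k+1)t_n]$, so it suffices to prove $S_n := \sum_{k=0}^{N_n-1} Y_k \to +\infty$ in probability.

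The key input is Proposition \ref{p.Wiener}, together with the monotonicity of the Wiener chaos sum $\Cov{1_{f(0)\in A}, 1_{f(t)\in A}} = \sum_{k\ge 1} e^{-tk}\|H_k\|_2^2$ in $t$, which yields
\[
\Var{\Eb{X_{k+1} \md W_{kt_n}}} \;=\; \Cov{1_{f(0)\in A_n}, 1_{f(2t_n)\in A_n}} \;\le\; \Cov{1_{f(0)\in A_n}, 1_{f(t_n)\in A_n}} \; \xrightarrow[n\to\infty]{} \; 0.
\]
Let $\calG_k := \sigma(W_0,W_{t_n},\ldots,W_{kt_n})$. By the Markov property of $(W_t)$, $\Eb{X_{k+1}\md\calG_k} = \phi_n(W_{kt_n})$ for a fixed function $\phi_n$, and by stationarity $R_k := \phi_n(W_{kt_n}) - p_n$, with $p_n := \Pb{f\in A_n}$, satisfies $\Eb{R_k^2} =: \eps_n^2 \to 0$ uniformly in $k$. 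Hence $\Eb{Y_k\md\calG_k} = (1-X_k)(p_n + R_k)$.

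Summing over $k$ gives the decomposition
\[
S_n \;=\; \sum_{k=0}^{N_n-1} M_k \;+\; p_n \sum_{k=0}^{N_n-1}(1-X_k) \;+\; \sum_{k=0}^{N_n-1}(1-X_k)R_k,
\]
where $M_k := Y_k - \Eb{Y_k\md\calG_k}$ are bounded martingale differences for $(\calG_k)$. Orthogonality yields $\Eb{(\sum_k M_k)^2} \le N_n$. The same covariance monotonicity shows $\Cov{X_k,X_l} \le \Cov{X_0,X_{t_n}} \to 0$ for $k\ne l$, so a routine second-moment bound gives $N_n^{-1}\sum_k(1-X_k) \to 1-p_n$ in probability. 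Finally, $\Eb{|\sum_k(1-X_k)R_k|} \le N_n \eps_n = o(N_n)$. Dividing by $N_n$ and using that $p_n(1-p_n)\ge c>0$ uniformly in $n$, we obtain $S_n/N_n \to p_n(1-p_n)$ in probability, and therefore $S_n \to +\infty$.

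The conceptual heart of the argument is the spectral identity of Proposition \ref{p.Wiener}, which upgrades pointwise noise sensitivity into $L^2$ convergence of the conditional expectations $\Eb{X_{k+1}\md\calG_k}$ to the unconditional probability $p_n$. This conditioning trick is what circumvents the main obstacle: a direct second-moment attack on $S_n$ would require bounding joint four-point quantities like $\Eb{Y_k Y_l}$, for which we have no a priori estimate, whereas the martingale decomposition replaces this with orthogonality at essentially no cost, leaving only the elementary two-point covariance $\Cov{X_k,X_l}$ to estimate.
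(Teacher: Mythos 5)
Your argument is correct and is essentially the paper's own route: the paper simply defers to the classical BKS-type argument (a direct consequence of Proposition \ref{p.Wiener} plus the Markov property and stationarity of the dynamics, citing Section 8 of Chapter I of \cite{book}), and your grid-sampling with the martingale/second-moment decomposition, driven by the identity $\Var{\Pb{f(t)\in A\mid W_0}}=\Cov{1_{f(0)\in A},1_{f(2t)\in A}}$ and the monotonicity in $t$ of the chaos expansion, is exactly that standard argument written out in detail.
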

\begin{proof}
The proof is exactly the same as the analogous result from \cite{BKS} for dynamical Bernoulli percolation i.e. this is a rather direct consequence of Proposition \ref{p.Wiener} and of the Markov property of the dynamics $t \mapsto f(t)$. We refer to Section 8 of Chapter I of \cite{book} for the proof.
\end{proof}

%The above results obviously extend to other fields: one can follow the same lines and prove that, if $q$ decays sufficiently fast (polynomially with exponent $>2$) and if $q \star q \geq 0$ \footnote{This implies FKG.}, then Theorem \ref{th.main} and Proposition \ref{pr.cor} still hold where:
%\[
%f(t) = q \star W(t)
%\]

\begin{proof}[Proof of Theorem \ref{thm:pol}]
For every $\alpha >0$, let
\[
M(\alpha) = \sup_{x \in nQ,\, t\in[0,n^{-\alpha}]} |f(t,x)-f(0,x)|.
\]
By Proposition \ref{p:dyna} and Theorem \ref{thm:mainbis}, there exists $\alpha > 0$ such that, with high probability, there exists $t \in [0,n^{-\alpha}]$ such that $f(t) \in \cross_0(nQ)$. Note that, since the crossing events are increasing, this implies that with high probability $f(0) \in \cross_{M(\alpha)}(nQ)$. Hence, it is sufficient to show that with high probability, $M(\alpha)$ is polynomially small in $n$. By Lemma~\ref{lem:goalapp} (applied to $a=1/2$) and by an union bound on the $1 \times 1$ boxes of the grid $\Z^2$ that intersect $nQ$, we have the following, which completes the proof
\[
\Pro \left[ M(\alpha) \geq O(1) n^{-\alpha/4} \right] \leq O(1)n^2\exp(-\Omega(1)n^{-\alpha/2}) \, .
\]
\end{proof}

Let us note that it is shown in \cite{MV} (Section 6) that, if one assumes that $q$ satisfies Conditions \ref{cond:sym} and \ref{cond:pol} for some $\beta > 2$ and if
\[
\forall \ell > 0 \, , \Pro \left[ f \in \cross_\ell(nQ) \right] \underset{n \rightarrow +\infty}{\longrightarrow} 1
\]
for the quad $Q = [0,2] \times [0,1]$ (whose distinguished sides are the left and right sides), then we even have that the convergence is \textbf{exponentially fast}. As a result, Theorem \ref{thm:pol} gives a randomized algorithm proof of the recent sharpness result from \cite{R} which we have stated in Subsection \ref{ss:gen} (in order to prove the results in the case $\ell < 0$, one can use the results at $\ell > 0$ and the duality property of the model, see for instance Lemma A.9 of \cite{RVa} or Section 3.2 of \cite{MV}).

%One interesting application of the above quantitative noise sensitivity is that it provides yet another viewpoint on the phase transition of these planar Gaussian fields. The idea here is to rely on the Schramm-Steif inequality (\cite{SS}) instead of the OSSS inequality which has proved  being very powerful recently to detect phase transitions for numerous statistical physics models (\cite{OSSS}). In fact it the OSSS inequality which was used in \cite{MV}. 

%The main idea is to combine the following two facts:
%\bnum
%\item  A quantitative noise sensitivity statement such as Theorem \ref{th.main}  
%implies that there exists $a>0$ such that the number of times in $[0,n^{-a}]$ at which the crossing event changes its outcome goes to $\infty$ in probability (see for instance Theorem 1.23 in \cite{book}).

%\item Moreover, the supremum of $|(q \star W(t))(x) - (q  \star W(0))(x)|$, $x \in B_n$, $t \in [0,n^{-a}]$ is less than $n^{-b}$ for some $b>0$.
%\enum

%As a result, this proves that the size critical window is $\leq n^{-b}$, which then implies that the critical point is $0$. \textbf{Here, contrary to \cite{MV}, we do not need to assume that $q \geq 0$ to compute this critical point.}

%{\em Still need to check details here...} 

%\section{Motivation: dynamical Bargmann-Fock models and moving planes within Bargmann-Fock in $\R^d$}\label{s.motiv}

\appendix

\section{An estimate for a dynamical planar Gaussian field}\label{s.appendix}

Let $(W_t(dx))_{t \geq 0}$ be the dynamical planar white noise defined in Subsection \ref{ss:gen} and let $f(t,\cdot) = q \star W_t$ for some $q \, : \, \R^2 \rightarrow \R$ that satisfies Condition \ref{cond:weak}. So, there exist $c,\delta>0$ such that, for every $x \in \R^2$ and every multi-index $\alpha$ with $|\alpha| \leq 2$, we have
\begin{equation}\label{e:app:q}
| \partial^\alpha q(x) | \leq c(1 \vee |x|)^{-(1+\delta)} \, .
\end{equation}
Below, the constants in the $\Omega(1)$ and $O(1)$'s only depend on $c$ and $\delta$ except if we add some subscript. We can (and do) consider a modification of $(t,x) \mapsto f(t,x)$ that is continuous in $t$ and $x$ (see the application of Kolmogorov's continuity theorem at the end of this appendix). The goal of this appendix is to prove the following lemma.
\begin{lemma}\label{lem:goalapp}
Let $h \in [0,1]$ and $a \in ]0,1[$. Then,
\[
\Pro \left[ \sup_{t \in [0,h], x \in [0,1]^2} |f(t,x)-f(0,x)| \geq O_a(1) h^{(1-a)/2} \right] \leq O_a(1)\exp(-\Omega_a(1)h^{-a}) \, .
\]
\end{lemma}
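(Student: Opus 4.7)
The plan is to view $g(t,x) := f(t,x) - f(0,x)$ as a centered Gaussian process on the compact set $D := [0,h] \times [0,1]^2$ and apply the Borell--TIS concentration inequality together with a Dudley entropy bound for the expected supremum. The key point is that since $\sigma^2 := \sup_D \Var[g] = O(h)$, Borell--TIS with $u = C_a h^{(1-a)/2}$ produces exactly the exponent $u^2/\sigma^2 = \Omega_a(1) h^{-a}$ required by the statement.

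First, I would set $K := q \star q$ and use $\E[f(s,x)f(t,y)] = e^{-|s-t|} K(x-y)$ to compute
\[
\E[g(s,x)g(t,y)] = K(x-y)\bigl[1 - e^{-s} - e^{-t} + e^{-|s-t|}\bigr].
\]
In particular $\Var[g(t,x)] = 2K(0)(1-e^{-t}) \leq 2K(0)\, h$, giving $\sigma^2 = O(h)$. Expanding similarly, the canonical pseudometric satisfies
\[
d_g\bigl((s,x),(t,y)\bigr)^2 = 2\bigl(K(0) - K(x-y)\bigr)\bigl[2 - e^{-s} - e^{-t}\bigr] + 2K(x-y)\bigl(1 - e^{-|s-t|}\bigr),
\]
which, using that Condition \ref{cond:weak} (with $q$ and $\partial^\alpha q$ in $L^2$ for $|\alpha| \leq 2$) makes $K$ a $C^2$ function and hence $K(0) - K(z) = O(|z|^2)$ for $|z|$ bounded, is at most $O(1)\bigl(h|x-y|^2 + |s-t|\bigr)$.

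Next I would count $d_g$-covering numbers of $D$. A $d_g$-ball of radius $\varepsilon$ around $(s,x)$ is contained in $\{|t-s| \lesssim \varepsilon^2\} \times \{|y-x| \lesssim \varepsilon/\sqrt{h}\}$, so for $\varepsilon \leq C\sqrt{h}$ we have $N(\varepsilon) \leq O(h^3/\varepsilon^6)$, while $N(\varepsilon) = 1$ for $\varepsilon \gtrsim \sqrt{h}$. Dudley's inequality then yields
\[
\E \sup_D |g| \;\lesssim\; \int_0^{C\sqrt{h}} \sqrt{\log N(\varepsilon)}\, d\varepsilon \;\lesssim\; \sqrt{h} \int_0^1 \sqrt{\log(1/u^2)}\, du \;=\; O(\sqrt{h}).
\]
(Continuity of the modification of $g$, which is needed to apply both Dudley and Borell--TIS, follows either from the continuity of the modification of $f$ already used in the appendix, or from Kolmogorov's criterion applied to $d_g$ with a high enough moment.)

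Finally, I would apply Borell--TIS: for all $u > 0$, $\Pro[\sup_D |g| \geq \E \sup_D |g| + u] \leq 2 \exp(-u^2/(2\sigma^2))$. Choosing $u = C_a h^{(1-a)/2}$ for a suitable constant $C_a$, and using $h \leq 1$, $a > 0$ so that $\sqrt{h} \leq h^{(1-a)/2}$, the sum $\E \sup|g| + u$ is bounded by $O_a(1)\, h^{(1-a)/2}$, while the exponent satisfies $u^2/(2\sigma^2) \geq \Omega_a(1)\, h^{-a}$. Combining gives the claimed bound. The only mildly delicate step is the entropy computation of Step~3, but it is routine once the anisotropic structure of $d_g$ (Lipschitz in $x$ with Lipschitz constant $\sqrt{h}$, $1/2$-H\"older in $t$) is identified.
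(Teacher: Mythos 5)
Your proof is correct and follows essentially the same route as the paper's: Borell--TIS plus Dudley applied to $g(t,x)=f(t,x)-f(0,x)$, with the anisotropic increment estimate $d_g^2 \leq O(1)\left(|t-s|+h|x-y|^2\right)$, the entropy bound yielding $\E\sup|g|=O(\sqrt h)$, and the choice $u=C_a h^{(1-a)/2}$ in the concentration step. Two small touch-ups: the bound $K(0)-K(z)=O(|z|^2)$ needs not only $K\in\mathcal{C}^2$ but also $\nabla K(0)=0$ (evenness of the covariance, or integration by parts $\int q\,\partial_i q=0$ as in the paper), and for the covering-number bound the relevant inclusion is the reverse of the one you state -- Euclidean boxes of size $\varepsilon^2\times(\varepsilon/\sqrt h)^2$ have $d_g$-diameter $O(\varepsilon)$, which gives $N(\varepsilon)=O(h^2/\varepsilon^4)$; your $h^3/\varepsilon^6$ is an overcount, though harmless in the Dudley integral.
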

\begin{remark}
Note that the lemma is sharp is the sense that
\[
\Pro \left[ \sup_{t \in [0,h], x \in [0,1]^2} |f(t,x)-f(0,x)| \geq h^{(1+a)/2} \right]
\]
goes to $1$ as $h$ goes to $0$, for any $a>0$. Actually, this is even true pointwise: for every $x$, the typical order of $|f(t,x)-f(0,x)|$ is $\sqrt{t}$.
\end{remark}

%Let us first note that the above (and a union-bound) implies that, for every $n \geq 1$ and $\alpha \in ]0,1[$,
%\begin{multline}\label{eq:goalappbis}
%\Pro \left[ \sup_{t \in [0,n^{-\alpha}], x \in [0,n]^2} |f(t,x)-f(0,x)| \geq O_\alpha(1) n^{-\alpha(1-\alpha)/2} \right]\\
%\leq O_\alpha(1) \exp(-\Omega_\alpha(1)n^{-\alpha^2}) \, ,
%\end{multline}
%which is what we need in Section \ref{s:PT}.

To prove Lemma \ref{lem:goalapp}, we use the two following classical results that we state for an a.s. continuous centered Gaussian process $(X(t,x))_{t \in [0,h], x \in [0,1]^2}$ on $[0,h] \times [0,1]^2$. Let $M:=\sup_{t \in [0,h],x \in [0,1]^2} X(t,x)$.
\begin{theorem}[Dudley's theorem]
Consider the following pseudo-metric on $[0,h] \times [0,1]^2$
\[
d((t,x),(s,y))=\sqrt{\E \left[ (X(t,x)-X(s,y))^2 \right]} \, .
\]
Also, let $\Delta$ be the $d$-diameter of $[0,h] \times [0,1]^2$ and, for every $\varepsilon>0$, let $N(d,\varepsilon)$ denote the minimal number of $d$-balls of radius $\varepsilon$ required to cover $[0,h] \times [0,1]^2$. Then,
\[
\E \left[ M \right] \leq 24\int_0^\Delta \sqrt{\log(N(d,\varepsilon))} d\varepsilon \, .
\]
\end{theorem}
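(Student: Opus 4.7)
The plan is to prove this by the standard dyadic chaining argument. Write $T := [0,h]\times[0,1]^2$. Since $X$ is a.s.\ continuous and $T$ is separable, $M = \sup_{t \in T_0} X(t)$ for any countable dense $T_0 \subseteq T$, and by monotone convergence it suffices to prove the bound with $M$ replaced by $\sup_{t \in F} X(t)$, uniformly over finite $F \subseteq T_0$.

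First I would build the chain. Set $\varepsilon_n := \Delta \cdot 2^{-n}$ for $n \geq 0$, and choose a minimal $\varepsilon_n$-covering $S_n \subseteq T$ of cardinality $N_n := N(d, \varepsilon_n)$, taking $S_0 = \{t_\star\}$ a singleton (so $N_0 = 1$). For each $t \in F$, select a nearest-point projection $\pi_n(t) \in S_n$ so that $d(t, \pi_n(t)) \leq \varepsilon_n$; since $F$ is finite, $\pi_n(t) = t$ for $n$ large. Using the telescoping identity
$$X(t) - X(t_\star) = \sum_{n=0}^{\infty} \bigl( X(\pi_{n+1}(t)) - X(\pi_n(t)) \bigr),$$
noting that $\E[X(t_\star)] = 0$, and bounding the supremum of a sum by the sum of suprema, one obtains
$$\E\bigl[\sup_{t \in F} X(t)\bigr] \leq \sum_{n=0}^{\infty} \E\Bigl[\max_{t \in F}\bigl(X(\pi_{n+1}(t)) - X(\pi_n(t))\bigr)\Bigr].$$

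For each $n$, the triangle inequality for $d$ shows that every increment $X(\pi_{n+1}(t)) - X(\pi_n(t))$ is a centered Gaussian of standard deviation at most $\varepsilon_n + \varepsilon_{n+1} \leq 2\varepsilon_n$. As $t$ ranges over $F$, the number of distinct pairs $(\pi_n(t), \pi_{n+1}(t))$ is at most $N_n N_{n+1} \leq N_{n+1}^2$. Applying the classical sub-Gaussian maximal inequality $\E[\max_{i \leq M} Z_i] \leq \sigma \sqrt{2 \log M}$ (a one-line Chernoff bound) with $\sigma = 2\varepsilon_n$ and $M = N_{n+1}^2$ yields a per-level bound of $4\varepsilon_n \sqrt{\log N_{n+1}}$. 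Next I would convert to the stated integral: since $\varepsilon \mapsto N(d, \varepsilon)$ is non-increasing, on $[\varepsilon_{n+2}, \varepsilon_{n+1}]$ the integrand is at least $\sqrt{\log N_{n+1}}$, whence
$$\frac{\varepsilon_n}{4} \sqrt{\log N_{n+1}} = (\varepsilon_{n+1} - \varepsilon_{n+2}) \sqrt{\log N_{n+1}} \leq \int_{\varepsilon_{n+2}}^{\varepsilon_{n+1}} \sqrt{\log N(d, \varepsilon)}\, d\varepsilon.$$
Summing the per-level bounds and using the above comparison gives $\E[\sup_F X] \leq 16 \int_0^\Delta \sqrt{\log N(d, \varepsilon)}\, d\varepsilon$, which is sharper than and hence implies the stated bound with constant $24$.

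The main obstacle is purely bookkeeping: keeping the chaining constants honest and justifying the interchange of supremum and summation. The latter is handled by the reduction to finite $F$ via a.s.\ continuity and separability before passing to the limit, while the Gaussian maximal inequality is a short union bound. Note that if the entropy integral diverges the claimed bound is vacuous; if it converges, the chaining sum is automatically finite, so no further integrability check is needed.
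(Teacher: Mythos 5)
The paper does not prove this statement at all: Dudley's theorem is quoted in Appendix A as a classical tool (alongside the BTIS inequality, cited from the literature) and is used as a black box in the proof of Lemma \ref{lem:goalapp}. So there is no argument of the authors to compare against; what you supply is the standard dyadic chaining proof, and it is essentially correct. Your bookkeeping is sound: increments at level $n$ have standard deviation at most $\varepsilon_{n+1}+\varepsilon_n\leq 2\varepsilon_n$, there are at most $N_nN_{n+1}\leq N_{n+1}^2$ of them, the Gaussian maximal inequality gives $4\varepsilon_n\sqrt{\log N_{n+1}}$ per level, and since $\varepsilon_{n+1}-\varepsilon_{n+2}=\varepsilon_n/4$ with $N(d,\cdot)$ non-increasing, the sum is bounded by $16\int_0^{\Delta}\sqrt{\log N(d,\varepsilon)}\,d\varepsilon$, which indeed implies the stated bound with constant $24$. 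One small inaccuracy: the claim that $\pi_n(t)=t$ for $n$ large is unjustified, since a minimal $\varepsilon_n$-net need not contain the points of $F$. This does not damage the argument, because the telescoping identity can be justified directly: $d(t,\pi_n(t))\leq\varepsilon_n\to 0$ geometrically, so $X(\pi_n(t))\to X(t)$ in $L^2$ and, by a Borel--Cantelli argument with the Gaussian tail bound, almost surely; alternatively, re-center each covering ball at a point of $F$ it contains, at the cost of a factor $2$ in the radii, which still leaves you below the constant $24$. Also make sure the reduction to finite $F$ invokes monotone convergence for $\sup_F X - X(t_\star)\geq 0$ (or notes the sups are bounded below by an integrable variable); with these minor repairs the proof is complete.
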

\begin{theorem}[Borell-Tsirelson-Ibragimov-Sudakov (BTIS) inequality; Theorem 2.9 of \cite{AW}]
We have
\[
\Pro \left[ M \geq u+\E \left[ M \right] \right] \leq 2 \exp \left( -\frac{u^2}{\sup_{t \in [0,h],x \in [0,1]^2}\E \left[ X(t,x)^2 \right]} \right) \, .
\]
\end{theorem}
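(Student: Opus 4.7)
The plan is to prove the BTIS inequality by a standard two-step strategy: first, reduce the supremum $M$ of the continuous Gaussian field to the supremum of a finite collection of jointly Gaussian variables via almost-sure approximation; second, realize this finite-dimensional supremum as a Lipschitz function of a standard Gaussian vector in $\R^N$ and apply the Gaussian concentration inequality for Lipschitz functions. The latter is the analytic heart of BTIS and is the step that genuinely uses the Gaussian structure.

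For the finite-dimensional reduction, fix a countable dense subset $\{p_k\}_{k \geq 1}$ of the compact set $K := [0,h] \times [0,1]^2$ and set $M_n := \max_{k \leq n} X(p_k)$. By almost-sure continuity of $X$, $M_n \uparrow M$ almost surely, and Dudley's bound (stated just above) ensures $\E[M] < \infty$, so monotone convergence gives $\E[M_n] \to \E[M]$. Writing the covariance matrix of $(X(p_1),\ldots,X(p_n))$ as $AA^T$ with $A \in \R^{n \times N}$, one may realize a standard Gaussian vector $Z$ in $\R^N$ with $(X(p_1),\ldots,X(p_n)) \overset{\text{law}}{=} AZ$; thus $X(p_k) = \langle a_k, Z\rangle$ with $\|a_k\|^2 = \E[X(p_k)^2] \leq \sigma^2 := \sup_{(t,x) \in K} \E[X(t,x)^2]$. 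The map
\begin{equation*}
F_n(z) := \max_{k \leq n} \langle a_k, z \rangle
\end{equation*}
is $\sigma$-Lipschitz with respect to the Euclidean norm, since $F_n(z) - F_n(z') \leq \max_k \langle a_k, z - z'\rangle \leq \sigma \|z - z'\|$ (and symmetrically), and satisfies $M_n = F_n(Z)$.

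The main step is the Gaussian concentration inequality for Lipschitz functions: for any $L$-Lipschitz $F \colon \R^N \to \R$ and any standard Gaussian $Z$ in $\R^N$,
\begin{equation*}
\P\bigl[F(Z) \geq \E[F(Z)] + u\bigr] \leq \exp\bigl(-u^2/(2L^2)\bigr).
\end{equation*}
This is the main obstacle. The classical derivation goes through the Gaussian isoperimetric inequality (Borell; Sudakov--Tsirelson): half-spaces minimize the Gaussian measure $\gamma(A_\varepsilon)$ of the $\varepsilon$-enlargement among Borel sets of fixed $\gamma(A)$. Applying this to $A = \{F \leq m\}$ where $m$ is a median of $F(Z)$, the Lipschitz property yields $\{F(Z) \geq m + u\} \subset (A_{u/L})^c$, hence $\P[F(Z) \geq m + u] \leq 1 - \Phi(u/L) \leq \tfrac{1}{2} e^{-u^2/(2L^2)}$, and the passage from median to mean is a standard estimate costing only a constant. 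An equivalent route avoids isoperimetry and uses Herbst's argument: the logarithmic Sobolev inequality $\mathrm{Ent}_\gamma(e^{\lambda F}) \leq 2\lambda^2 L^2\, \E[e^{\lambda F(Z)}]$ yields an ODE for $\lambda \mapsto \frac{1}{\lambda}\log \E[e^{\lambda F(Z)}]$ that gives $\E[e^{\lambda(F(Z) - \E F(Z))}] \leq e^{\lambda^2 L^2/2}$, after which Chernoff's inequality produces the stated tail.

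Finally, applying this concentration to $F_n$ with $L = \sigma$ gives $\P[M_n \geq \E[M_n] + u] \leq \exp(-u^2/(2\sigma^2))$ for every $n$. Passing to the limit via $M_n \uparrow M$ and $\E[M_n] \to \E[M]$ (a vanishing slack absorbed by the factor $2$ in front) yields the inequality for $M$, completing the proof. The reduction steps are soft and essentially forced by a.s.\ continuity; the one genuinely nontrivial ingredient is the Gaussian concentration inequality for Lipschitz functions, itself a consequence of the deep isoperimetric or log-Sobolev properties of the Gaussian measure.
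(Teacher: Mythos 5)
Your argument is the standard proof of the Borell--TIS inequality, and in fact the paper gives no proof at all here: the statement is quoted from Aza\"is--Wschebor, so supplying the finite-dimensional approximation plus Gaussian concentration is a reasonable thing to do, and those steps are sound. The reduction $M_n\uparrow M$ with $\E[M_n]\to\E[M]$ (finite by Dudley), the realization $M_n=F_n(Z)$ with $F_n$ $\sigma$-Lipschitz, the concentration bound $\P\left[F_n(Z)\ge \E[F_n(Z)]+u\right]\le \exp\left(-u^2/(2\sigma^2)\right)$ via isoperimetry or Herbst, and the limit passage are all fine; note that since $\E[M_n]\le\E[M]$ the limit needs no slack at all, so even the prefactor is not consumed there.

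The one genuine problem is your closing claim that the remaining discrepancy is ``absorbed by the factor $2$ in front''. What you have proved is $\P\left[M\ge \E[M]+u\right]\le \exp\left(-u^2/(2\sigma^2)\right)$, whereas the displayed statement has exponent $-u^2/\sigma^2$, and $\exp\left(-u^2/(2\sigma^2)\right)\le 2\exp\left(-u^2/\sigma^2\right)$ fails as soon as $u^2>2\sigma^2\log 2$; no prefactor can repair a factor $2$ in the exponent. In fact the display as printed cannot be proved: taking the degenerate continuous field $X(t,x)\equiv\xi$ with $\xi\sim\mathcal{N}(0,\sigma^2)$ gives $\P\left[M\ge\E[M]+u\right]\sim \frac{\sigma}{u\sqrt{2\pi}}e^{-u^2/(2\sigma^2)}$, which exceeds $2e^{-u^2/\sigma^2}$ for large $u$. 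The resolution is that the paper's display drops the factor $\tfrac12$ present in the cited theorem of Aza\"is--Wschebor, whose exponent is $u^2/(2\sigma^2)$ (with a two-sided bound and prefactor $2$); your proof yields exactly that correct form, and it is all that is needed in the proof of Lemma \ref{lem:goalapp}, where the constant in the exponent is absorbed into $\Omega_a(1)$. So the right move is to flag the misprint and prove the $u^2/(2\sigma^2)$ version, not to claim the printed bound follows. One cosmetic slip: the Gaussian log-Sobolev inequality gives $\mathrm{Ent}_\gamma\left(e^{\lambda F}\right)\le \tfrac12\lambda^2L^2\,\E\left[e^{\lambda F}\right]$, not $2\lambda^2L^2\,\E\left[e^{\lambda F}\right]$; with your stated constant, Herbst's argument would give $e^{2\lambda^2L^2}$ rather than the $e^{\lambda^2L^2/2}$ you then use.
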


\begin{proof}[Proof of Lemma \ref{lem:goalapp}]
We will apply the above theorems to the field $X(t,x)=f(t,x)-f(0,x)$. First note that, by the BTIS inequality applied to $u=h^{(1-a)/2}$, it is sufficient - in order to prove the lemma - to show that (a) $\E \left[M \right] \leq O(\sqrt{h})$ and (b) $\E \left[ (f(t,x)-f(0,x))^2 \right] \leq O(h)$. In order to prove (a) and (b), we are going to show that, for every $s,t \in [0,h]$ and $x,y \in [0,1]^2$,
\begin{multline}\label{eq:var1}
\E \left[ \Big( f(t,x)-f(0,x)-(f(s,y)-f(0,y)) \Big)^2 \right]\\
\leq O(1) \left( |t-s|+h|x-y|^2 \right) \, .
\end{multline}
Let us first explain why \eqref{eq:var1} implies the estimates (a) and (b). This implies readily (b). The fact that this also implies (a) is a consequence of Dudley's theorem. Indeed, first note that \eqref{eq:var1} implies that
\[
\Delta \leq O(\sqrt{h}) \, .
\]
Let $\varepsilon \in ]0,\Delta]$ and note that (by \eqref{eq:var1}), for every $\varepsilon \leq \sqrt{h}$, the set $[0,\varepsilon^2]\times[0,\varepsilon/\sqrt{h}]^2 \subseteq [0,h]\times[0,1]^2$ is included in a $d$-ball of radius $\leq O(1)\varepsilon$. Moreover, one can cover $[0,h]\times[0,1]^2$ by using $\leq O(1)h^2/\varepsilon^4$ sets obtained by translating $[0,\varepsilon^2]\times[0,\varepsilon/\sqrt{h}]^2$. As a result,
\[
N(d,\varepsilon) \leq O(1)h^2/\varepsilon^4 \, ,
\]
and thus, by Dudley's theorem,
\[
\E \left[ M \right] \leq 24 \int_0^{O(\sqrt{h})} \sqrt{\log(O(h^2/\varepsilon^4))} d\varepsilon
\leq O(\sqrt{h}) \, .
\]
This ends the proof of Lemma \ref{lem:goalapp} provided that we prove \eqref{eq:var1}. Let us end this appendix by showing this variance estimate.
%\paragraph{An application of a quantitative Kolmogorov's continuity theorem.} We will use the following result.
%\begin{theorem}[Quantitative Kolmogorov's continuity theorem, see Theorem C.6 of \cite{khoshnevisan2014analysis}]\label{thm:Kolmogorov}
%Let $(X(t,x))_{t \in [0,1],x \in [0,1]^2}$ be a random process, let $\beta_1,\beta_2,\beta_3 \in ]0,1]$ and define
%\[
%\rho(t,x)=|t|^{\beta_1}+|x_1|^{\beta_2}+|x_2|^{\beta_3}.
%\]
%Assume that there exist $C>0$ and $k > \sum_{i=1}^3 \beta_i^{-1}=:H$ such that
%\[
%\forall t,s \in [0,1], x,y \in [0,1]^2, \, \E \left[ \left| X(t,x)-X(s,y) \right|^k \right]^{1/k} \leq C\rho(t-s,x-y).
%\]
%Then $X$ has a modification $\widetilde{X}$ that is Hölder continous in $(t,x)$ almost surely. Moreover, for every $\ell \in ]0,1-H/k[$, there exists $D>0$ (that depends only on $\beta_1,\beta_2,\beta_3,C,k,q$) such that
%\[
%\E \left[ \sup_{t,s \in [0,1], x,y \in [0,1]^2: \atop (t,x) \neq (s,y)} \left| \frac{\widetilde{X}_t(x)-\widetilde{X}_s(y)}{\rho(t-s,x-y)^\ell} \right|^k \right] \leq D.
%\]
%\end{theorem}
Let $x,y \in [0,1]^2$ and $s,t \in [0,h]$. We have
\begin{align*}
& \hspace{-0.4cm} \E \left[ \Big( f(t,x)-f(0,x)-(f(s,y)-f(0,y)) \Big)^2 \right]\\
 = & \E \left[ ( f(t,x)-f(0,x) )^2 \right] + \E \left[ ( f(s,y)-f(0,y))^2 \right]\\
& \hspace{2cm} -2 \E \left[ ( f(t,x)-f(0,x)) (f(s,y)-f(0,y)) \right]
\end{align*}
\begin{align*}
 = &(2-2e^{-t})\int q^2 + (2-2e^{-s})\int q^2\\ 
& \hspace{2cm}  - 2 (e^{-|t-s|}+1-e^{-s}-e^{-t}) \int q(x-z)q(y-z)dz\\
= & (4-2e^{-t}-2e^{-s})\int q^2 - (2e^{-|t-s|}+2-2e^{-s}-2e^{-t}) \int q(z)q(z+y-x)dz\\
 = & (4-2e^{-t}-2e^{-s})\int q(z)(q(z)-q(z+y-x))dz\\
& \hspace{2cm} +(2-2e^{-|t-s|})\int q(z)q(z+y-x)dz \, .
\end{align*}
Let us note that
\[
0 \leq 4-2e^{-t}-2e^{-s} \leq  O(h) \; ; \: \: 0 \leq 2-2e^{-|t-s|} \leq O(|t-s|)
\]
and
\[
\left| \int q(z)q(z+y-x)dz \right| \leq O(1) \, .
\]
Let us estimate $\int q(z)(q(z)-q(z+y-x))dz$. By Taylor's theorem and \eqref{e:app:q}, for every $z \in \R^2$ and every multi-index $\alpha$ such that $|\alpha| = 2$, there exists a function $R_\alpha^z \, : \, \R^2 \rightarrow \R$ such that $||R_\alpha^z||_\infty \leq O(|z \vee 1|^{-(1+\delta)})$ and $q(z) -q(z+y-x)$ equals
\[
\partial^{(1,0)}q(z)(x_1-y_1) + \partial^{(0,1)}q(z)(x_2-y_2) + \sum_{\alpha \in \N^2: \atop \alpha_1+\alpha_2=2} R_\alpha^z(x-y) \prod_{i=1}^2 (x_i-y_i)^{\alpha_i} \, .
\]
By using that $\int q\partial^{(1,0)}q = \int q \partial^{(0,1)}q = 0$ (by integration by parts), we obtain that
%\[
%\int q(z) \left( q(z) -q(z+y-x) \right) dz = \prod_{i=1}^2 (x_i-y_i)^{\alpha_i} \int q(z) \left( \sum_{\alpha \in \N^2: \atop \alpha_1+\alpha_2=2} R_\alpha^z(x-y) \right) dz \, .
%\]
%As a result,
\[
\left| \int q(z) \left( q(z) -q(z+y-x) \right) dz \right| \leq O(1) \sum_{\alpha \in \N^2: \atop \alpha_1+\alpha_2=2} \prod_{i=1}^2 (x_i-y_i)^{\alpha_i} \leq O(|x-y|^2) \, .
\]
This implies the desired estimate \eqref{eq:var1} and completes the proof.
\end{proof}

Let us conclude this appendix by explaining why there is a modification of $(t,x) \mapsto f(t,x)$ that is a.s. continuous. A similar computation for the variance gives that, for every $t,s \in [0,1]$ and $x,y \in [0,1]^2$,
\[
\E \left[ ( f(t,x)-f(s,y) )^2 \right]  \leq O\left( |t-s|+|x-y|^2 \right) \, ,
\]
so for every $p>0$ we have
\[
\E \left[ ( f(t,x)-f(s,y) )^p \right] \leq O_p(1) \left( (|t-s|+|x-y|^2)^{p/2} \right) \, .
\]
In particular, if we apply Kolmogorov's continuity theorem to some $p>2$ (and if we use transitivity), we obtain that there exists a modification of $(t,x) \mapsto f(t,x)$ that is a.s. continuous.

\section{Scaling limit for the discrete white noise}\label{s:sobo}

As in Section \ref{s.proof}, for every integer $N \geq 1$ we consider the following discrete white noise on $\R^2$
\begin{align*}\label{}
W^N(x) := N \sum_{v\in \frac 1 N \Z^2} \sigma_v 1_{x \in v+[-1/N,1/N]^2} \,,
\end{align*}
where the random variables $\sigma_v$ are independent and $\Pro \left[ \sigma_v = 1\right] = \Pro \left[ \sigma_v = -1 \right]=1/2$. Moreover, we let each Bernoulli variable $\sigma_v$ evolve according to a rate 1 Poisson Point process, which induces a dynamics $t \mapsto W^N_t$.

\begin{lemma}\label{lem:law_sobo}
For any $t \geq 0$, any (open) square $S$ and any $\varepsilon>0$, we have the following convergence in law in $H^{-1-\varepsilon}(S) \times H^{-1-\varepsilon}(S)$:
\begin{align}
(W^N_0, W^N_t) \overset{\text{law}}\longrightarrow (W_0,W_t)\,,
\end{align}
where $W_t = e^{-t}W_0+ \sqrt{1- e^{-2t}} \widetilde{W}$ and $\widetilde{W}$ is a white noise independent of $W$.
\end{lemma}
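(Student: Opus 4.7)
The plan is to follow the standard tightness-plus-finite-dimensional-distributions scheme for convergence in law in the separable Hilbert space $\mathcal{H}:=H^{-1-\varepsilon}(S)\times H^{-1-\varepsilon}(S)$, carrying out three steps.

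First I would establish convergence of finite-dimensional distributions: for any finite family $\varphi_1,\ldots,\varphi_k\in C_c^\infty(S)$, the vector $(\langle W^N_0,\varphi_i\rangle,\langle W^N_t,\varphi_i\rangle)_{i=1}^k$ converges in law to the centered Gaussian with the covariance structure of $(\langle W_0,\varphi_i\rangle,\langle W_t,\varphi_i\rangle)_{i=1}^k$. Each pairing $\langle W^N_s,\varphi\rangle$ is a linear combination $\sum_v \sigma_v(s)\,\alpha_v^N(\varphi)$ of the pairs $(\sigma_v(0),\sigma_v(t))$, independent across $v\in \frac{1}{N}\Z^2$, with bounded coefficients of order $O(1/N)$ and $\Theta(N^2)$ nonzero terms on the support of $\varphi$. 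The Lindeberg--Feller CLT for triangular arrays of independent bounded vectors in $\mathbb{R}^{2k}$ then applies; the joint covariance $\mathbb{E}[\sigma_v(0)\sigma_v(t)]=e^{-t}$ (the probability of no Poisson jump in $[0,t]$) is the correct one, and a standard Riemann-sum argument gives $\sum_v \alpha_v^N(\varphi)\alpha_v^N(\psi)\to c\int\varphi\psi\,dx$ with the right constant, matching the limiting OU-white-noise covariance $\mathbb{E}[\int u\,dW_0\int v\,dW_t]=e^{-t}\int uv$.

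Second I would show tightness of $\{(W^N_0,W^N_t)\}_N$ in $\mathcal{H}$ using Rellich's compact embedding $H^{-1-\varepsilon/2}(S)\hookrightarrow H^{-1-\varepsilon}(S)$, valid since $S$ is a bounded open set. By Markov's inequality it suffices to establish the uniform moment bound $\sup_N \mathbb{E}\bigl[\|W^N_s\|_{H^{-1-\varepsilon/2}(S)}^2\bigr]<\infty$ for $s\in\{0,t\}$. Expanding in the Dirichlet-Laplacian eigenbasis $(e_k,\lambda_k)_{k\geq 1}$ of $S$, with $\lambda_k\asymp k$ by Weyl's law in dimension $2$, one gets
$$\mathbb{E}\|W^N_s\|_{H^{-1-\varepsilon/2}(S)}^2 = \sum_k (1+\lambda_k)^{-1-\varepsilon/2}\,\mathbb{E}[\langle W^N_s,e_k\rangle^2].$$
A one-line Cauchy--Schwarz applied cell by cell bounds $\mathbb{E}[\langle W^N_s,e_k\rangle^2]$ by a constant $C\|e_k\|_{L^2}^2=C$ uniformly in $N$ \emph{and} $k$, and the series $\sum_k k^{-1-\varepsilon/2}$ converges.

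Finally, I would identify the limit: Steps~1 and~2 together imply that every subsequential weak limit of $(W^N_0,W^N_t)$ in $\mathcal{H}$ has the same finite-dimensional marginals as $(W_0,W_t)$. Since $H^{-1-\varepsilon}(S)$ is a separable Hilbert space, its Borel $\sigma$-algebra is generated by the continuous linear functionals $u\mapsto\langle u,\varphi\rangle$, $\varphi\in C_c^\infty(S)$; hence finite-dimensional marginals uniquely determine the law, so the full sequence converges to $(W_0,W_t)$. I expect the tightness step to be the main technical point: the variance bound for $\langle W^N_s,e_k\rangle$ must be uniform in both $N$ and $k$ simultaneously, which forces one to use Cauchy--Schwarz on each individual cell $v+[-1/N,1/N]^2$ rather than a crude $\ell^\infty$-estimate on the eigenfunction $e_k$ (which would degrade with $k$).
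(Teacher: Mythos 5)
Your proof is correct and follows essentially the same route as the paper: tightness from a uniform second-moment bound in $H^{-1-\varepsilon/2}(S)$ combined with Rellich's compact embedding, plus identification of the limit via convergence of the finite-dimensional distributions of linear functionals. The only differences are cosmetic: the paper pairs against the (uniformly bounded) Laplacian eigenfunctions of $S$ and computes the limit of the L\'evy transform, whereas you pair against $C_c^\infty(S)$ test functions and invoke Lindeberg--Feller with a cell-by-cell Cauchy--Schwarz bound; both implementations work.
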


\begin{proof}
We will use the following: i) the eigenfunctions $\Psi_n$ of the Laplacian on $S$ (with eigenvalues $\lambda_n = \Theta(n)$) are uniformly bounded and ii) for every $u \in H^s(S), \, ||u||_{H^s(S)}^2=\sum_n <u,\Psi_n>^2 \lambda^s_n$.

Let us first prove tightness. By Rellich theorem, any closed ball of $H^{-1-\varepsilon/2}(S)$ is a compact subset of $H^{-1-\varepsilon}(S)$. As a result, tightness follows from the fact that, for every $\varepsilon>0$, $\sup_{N \geq 1} \E \left[ || W_0^N ||_{H^{-1-\varepsilon}(S)}^2 \right] < +\infty$, which follows from an elementary computation by using i) and ii) above.

We now prove that the only possible limit is $(W_0,W_t)$. To this purpose, we note that, by ii), the law of a random variable $h$ in $H^s(S)$ is determined by the joint law of the variables $<h,\Psi_n>$. The result now follows for instance from an explicit computation of the limit of the Lévy transform of $(<W_0^N,\psi_1>,<W_t^N,\psi_1>,\cdots,<W_0^N,\psi_m>,<W_t^N,\psi_m>)$, for every $m$.
\end{proof}

\section{A regularity result}\label{s.reg}

The following is a direct consequence\footnote{In this lemma, one needs the additional hypothesis that ``$f$ is not degenerate'', which is actually a consequence of the fact that the support of the Fourier transform of $q$ contains an open set, see Theorem 6.8 of \cite{Wen}.} of Lemma A.9 of \cite{RVa}: Let $L \subset \R^2$ be a line. Under Condition \ref{cond:sym}, a.s. the following holds: i) The sets $\{ f \geq 0\}$ and $\{ f \leq 0 \}$ are two $\mathcal{C}^1$-smooth $2$-dimensionnal manifolds with boundary, ii) $\partial \{ f \geq 0 \} = \partial \{ f \leq 0 \} = \{ f = 0 \}$ and iii) the set $\{ f = 0 \}$ is a $\mathcal{C}^1$-smooth $1$-dimensionnal manifold that intersects $L$ transversally.

This implies in particular that, for every quad $Q$, a.s. there exists some (random) $\varepsilon>0$ such that, for all continuous function $g$ that satisfies $\parallel f-g \parallel_{Q,\infty} \leq \varepsilon$, we have $f \in \cross_0(Q)$ if and only if $g \in \cross_0(Q)$.

%\include{References}
%\addcontentsline{toc}{section}{Bibliography}

%\newcommand{\noop}[1]{} \def\cprime{$'$}
%\bibliographystyle{abbrv}
%\bibliographystyle{alpha}
\bibliographystyle{alpha}

\begin{thebibliography}%{GG}
{GPS13b} %% C'est dans la longeur a droite (GPS13) que se cache la distance !!!! Pour faire joli !

\bibitem[AB18]{AB}
Ahlberg, D. and Baldasso, R. {\em Noise sensitivity and Voronoi percolation.} Electronic Journal of Probability, 23, 2018.

\bibitem[ABGM14]{ABGM}
Ahlberg, D., Broman, E., Griffiths, S. and Morris, R. {\em Noise sensitivity in continuum percolation.} Israel Journal of Mathematics, 201(2), 847-899, 2014.

\bibitem[AGMT16]{AGMT}
Ahlberg, D., Griffiths, S., Morris, R. and Tassion, V. {\em Quenched Voronoi percolation.} Advances in Mathematics, 286, 889-911, 2016.

\bibitem[Ale96]{Alex}
Alexander, K.S. {\em Boundedness of level lines for two-dimensional random fields.} The Annals of Probability 24.4, 1653--1674, 1996.

\bibitem[AW09]{AW} Azaïs, J.-M. and Wschebor, M.
\newblock {\em Level sets and extrema of random processes and fields.}
\newblock John Wiley $\&$ Sons, 2009.

\bibitem[BDS06]{BDS}
Bogomolny, E., Dubertrand, R. and Schmit, C. {\em SLE description of the nodal lines of random wavefunctions.} Journal of Physics A: Mathematical and Theoretical, 40(3), 381, 2006.

\bibitem[BG17a]{BG}
Beffara, V. and Gayet, D. {\em Percolation of random nodal lines.} Publications math\'ematiques de l'IH\'ES 126.1: 131--176, 2017.

\bibitem[BG17b]{BGbis}
Beffara, V. and Gayet, D. {\em Percolation without FKG.} arXiv preprint arXiv:1710.10644, 2017.

%\bibitem{BGN}
%Barsky, David J., Geoffrey R. Grimmett, and Charles M. Newman. "Percolation in half-spaces: equality of critical densities and continuity of the percolation probability." Probability Theory and Related Fields 90.1 (1991): 111--148.

\bibitem[BM18]{BM}
Beliaev, D. and Muirhead, S. {\em Discretisation schemes for level sets of planar Gaussian fields.} Communications in Mathematical Physics 359.3 : 869--913, 2018.

\bibitem[BMR18]{BMR}
Beliaev, D., Muirhead, S. and Rivera, A. {\em A covariance formula for topological events of smooth Gaussian fields.} arXiv preprint arXiv:1811.08169, 2018.

\bibitem[BKS99]{BKS}
Benjamini, I., Kalai, G. and Schramm, O. {\em Noise sensitivity of Boolean functions and applications to percolation.} Publications Mathématiques de l'Institut des Hautes Etudes Scientifiques 90.1: 5--43, 1999.

\bibitem[BS02]{BSa}
Bogomolny, E. and Schmit, C. {\em Percolation model for nodal domains of chaotic wave functions.} Physical Review Letters, 88(11), 114102, 2002.

\bibitem[BS07]{BSb}
Bogomolny, E. and Schmit, C. {\em Random wavefunctions and percolation.} Journal of Physics A: Mathematical and Theoretical, 40(47), 14033, 2007.

\bibitem[BS98]{BS}
Benjamini, I. and Schramm, O. {\em Exceptional planes of percolation.} Probability theory and related fields 111.4: 551--564, 1998.

\bibitem[BR06]{BR}
Bollobás, B. and Riordan, O. {\em The critical probability for random Voronoi percolation in the plane is 1/2.} Probability theory and related fields 136.3: 417--468, 2006.

\bibitem[CSZ16]{CSZ}
Caravenna, F., Sun, R. and Zygouras, N. {\em Polynomial chaos and scaling limits of disordered systems.} Journal of the European Mathematical Society, 19(1), pp.1--65. 2016. 

\bibitem[DRT19]{OSSS}
Duminil-Copin, H., Raoufi A. and Tassion, V. {\em Sharp phase transition for the random-cluster and Potts models via decision trees.} Annals of Mathematics 189.1: 75--99, 2019.

\bibitem[GPS10]{GPS}
Garban, C., Pete, G. and Schramm, O. {\em The Fourier spectrum of critical percolation.} 
 Acta Mathematica 205.1: 19--104, 2010.

\bibitem[GS14]{book}
Garban, C. and Steif, J.E.
\newblock {\em Noise sensitivity of Boolean functions and percolation}.
\newblock Cambridge University Press, 2014.

\bibitem[HPS97]{HPS}
H\"aggstr\"om, O., Peres, Y. and Steif, J.E. {\em Dynamical percolation.} Annales de l'Institut Henri Poincare (B) Probability and Statistics. Vol. 33. No. 4, 1997.

\bibitem[Jan97]{Janson}
Janson, S.
\newblock{Gaussian Hilbert spaces}.
\newblock Vol. 129. Cambridge university press, 1997.

\bibitem[KMS12]{KMS}
Keller, N., Mossel, E. and Sen, A. {\em Geometric influences.} The Annals of Probability 40(3), 1135-1166, 2012.

\bibitem[LSW02]{LSW}
Lawler, G., Schramm, O. and Werner, W. {\em One-arm exponent for critical 2D percolation.} Electronic Journal of Probability, 7, 2002.

\bibitem[MV18]{MV}
Muirhead, S. and Vanneuville, H. {\em The sharp phase transition for level set percolation of smooth planar Gaussian fields.} Annales de l'Institut Henri Poincare (B) Probability and Statistics. Vol. 56. No. 2, 2020.

\bibitem[Pit12]{Pit}
Piterbarg, V.I.
\newblock {\em Asymptotic methods in the theory of Gaussian processes and fields}.
\newblock American Mathematical Soc., 2012.

\bibitem[PT10]{Peccati}
Peccati, G. and Taqqu, M.S.
\newblock{ \em Wiener Chaos: Moments, Cumulants and Diagrams}.
\newblock Springer-Verlag, 2010.

\bibitem[RV17a]{RVa}
Rivera, A. and Vanneuville, H. {\em Quasi-independence for nodal lines.} Annales de l'Institut Henri Poincare (B) Probability and Statistics. Vol. 55. No. 3, 2019.

\bibitem[RV17b]{RVb} Rivera A. and Vanneuville, H. {\em The critical threshold for Bargmann-Fock percolation.} Annales Henri Lebesgue, Vol. 3, 2020.

\bibitem[Riv19]{R} Rivera, A. {\em Talagrand's inequality in planar Gaussian field percolation.} arXiv peprint, arXiv:1905.13317, 2019.

\bibitem[SS10]{SS}
Schramm, O. and Steif, J.E.,  {\em Quantitative noise sensitivity and exceptional times for percolation.} Annals of Mathematics. 171.2: 619--672, 2010.

\bibitem[SW01]{SW}
Smirnov, S. and Werner, W. {\em Critical exponents for two-
dimensional percolation.} Math. Res. Lett., 8(5-6):729-744, 2001.

\bibitem[Wei84]{W}
Weinrib, A. {\em Long-range correlated percolation.} Physical Review B, 29(1), 387, 1984.

\bibitem[Wen05]{Wen}
Wendland, H. {\em Scattered Data Approximation.} Cambridge University Press, 2005.

\end{thebibliography}

\end{document}